\newcounter{pecounter}
\DeclareSymbolFontAlphabet{\amsmathbb}{AMSb}
\ifnum \arabic{pecounter} = 0
\newcounter{numstep}
\newcommand{\step}[1]{\bigskip

\noindent {\bf Step \arabic{numstep}. #1}
\stepcounter{numstep}

\medskip
}
\newcommand\longto{{\longrightarrow}}
\newcommand{\fS}{\mathfrak{S}}
\newcommand{\clmn}{c_{\lambda,\mu}^\nu}
\newcommand{\clmnp}{c_{\lambda(p,1),\mu(p,1)}^{\nu(p,1)}}
\newcommand{\clmnpq}{c_{\lambda(p,q),\mu(p,q)}^{\nu(p,q)}}
\newcommand{\Clmn}{c_{\lambda,\mu,\nu}}
\newcommand{\Proj}{{\mathrm{Proj}}}
\newcommand\PP{\mathbb P}
\newcommand\CC{\mathbb{C}}\newcommand\G{\mathbb{G}}\newcommand\QQ{\mathbb{Q}}
\newcommand\NN{\mathbb{N}}\newcommand\ZZ{\mathbb{Z}}
\def\revddots{\mathinner{\mkern1mu\raise\p@\vbox{\kern7\p@\hbox{.}}\mkern2mu\raise4\p@\hbox{.}\mkern2mu\raise7\p@\hbox{.}\mkern1mu}}
\newcommand\Li{{\mathcal {L}}}
\newcommand\cO{{\mathcal {O}}}
\newcommand\quot{/\hspace{-0.4ex}/}
\newcommand{\oalpha}{{\overline{\alpha}}}
\newcommand{\oI}{{\overline{I}}}
\newcommand{\oJ}{{\overline{J}}}
\newcommand{\oK}{{\overline{K}}}
\newcommand\lgl{{\mathfrak{gl}}}
\newcommand{\lstab}{{\mathfrak{stab}}}
\newcommand{\ltrans}{{\mathfrak{trans}}}
\newcommand{\sgen}{{s_{\mathrm{gen}}}}
\newcommand{\tgen}{{t_{\mathrm{gen}}}}
\newcommand\C{{\mathbb C}}
\newcommand\Z{{\mathbb Z}}
\newcommand\cL{{\mathcal{L}}}\newcommand\cF{{\mathcal{F}}}
\newcommand{\rss}{{\rm ss}}
\newcommand{\ac}{{\mathcal{AC}}}
\newcommand{\acgx}{\ac^G(X)}
\newcommand{\modx}{\Eqd(X,G)}
\newcommand{\modxq}{\Eqd(X(q),G(q))}
\newcommand\Ho{{\operatorname H}^0}
\newcommand\Hom{{\operatorname{Hom}}}
\newcommand{\rk}{{\operatorname{rk}}}
\newcommand{\im}{{\operatorname{Im}}}
\newcommand\GL{{\operatorname{GL}}}
\newcommand\Fl{{\operatorname{Fl}}}
\newcommand\Eqd{{\operatorname{Eqd}}}
\newcommand\Pic{{\operatorname{Pic}}}
\newcommand\Gr{{\operatorname{Gr}}}
\newtheorem{notation}{Notation}
\newtheorem{exple}{Example}
\newtheorem{prop}{Proposition}
\newtheorem{theo}[prop]{Theorem}
\newtheorem{lemma}[prop]{Lemma}
\newtheorem{coro}[prop]{Corollary}
\newcounter{details}
\ifnum \arabic{details} = 0
\ifnum \arabic{details} > 0
\ifnum \arabic{details} = 2
\ifnum \arabic{details} > 0
\ifnum \arabic{details} = 0 
\newcommand\inv{{^{-1}}}
\newcommand\DynkinNodeSize{2mm}
\newcommand\DynkinArrowLength{3mm}
\tikzset{
  dnode/.style={
    circle,
    inner sep=0pt,
    minimum size=\DynkinNodeSize,
    fill=white,
    draw},
  middlearrow/.style={
    decoration={markings,
      mark=at position 0.6 with
      {\draw (0:0mm) -- +(+135:\DynkinArrowLength); \draw (0:0mm) -- +(-135:\DynkinArrowLength);},
    },
    postaction={decorate}
  },
  leftrightarrow/.style={
    decoration={markings,
      mark=at position 0.999 with
      {
      \draw (0:0mm) -- +(+135:\DynkinArrowLength); \draw (0:0mm) -- +(-135:\DynkinArrowLength);
      },
      mark=at position 0.001 with
      {
      \draw (0:0mm) -- +(+45:\DynkinArrowLength); \draw (0:0mm) -- +(-45:\DynkinArrowLength);
      },
    },
    postaction={decorate}
  },
  sedge/.style={
  },
  dedge/.style={
    middlearrow,
    double distance=0.5mm,
  },
  tedge/.style={
    middlearrow,
    double distance=1.0mm+\pgflinewidth,
    postaction={draw}, 
  },
  infedge/.style={
    leftrightarrow,
    double distance=0.5mm,
  }
}
\begin{document}
\title{Bidilation of small Littlewood-Richardson coefficients}

\author[P.-E. Chaput]{Pierre-Emmanuel Chaput}
\address{Universit\'e de Lorraine, CNRS, Institut \'Elie Cartan de Lorraine, UMR 7502, Vandoeu\-vre-l\`es-Nancy, F-54506, France}
\email{pierre-emmanuel.chaput@univ-lorraine.fr}

\author[N. Ressayre]{Nicolas Ressayre}
\address{Université Claude Bernard Lyon I, Institut Camille Jordan (ICJ), UMR CNRS 5208, 43 boulevard du 11 novembre 1918,
69622 Villeurbanne CEDEX}
\email{ressayre@math.univ-lyon1.fr}

\thanks{This work was supported by the ANR GeoLie project, of the French Agence Nationale de la Recherche.}

\begin{abstract}
The Littlewood-Richardson coefficients $c_{\lambda,\mu}^\nu$ are the multiplicities in the
tensor product decomposition of two irreducible representations of the
general linear group $\GL(n,\CC)$. 
They are parametrized by the triples of partitions $(\lambda,\mu,\nu)$
of length at most $n$.
By the so-called Fulton conjecture, if $c_{\lambda,\mu}^\nu=1$ then
$c_{k\lambda,k\mu}^{k\nu}=1$, for any $k\geq 0$.
Similarly, as proved by Ikenmeyer or Sherman,  if $c_{\lambda,\mu}^\nu=2$ then
$c_{k\lambda,k\mu}^{k\nu}=k+1$, for any $k\geq 0$.

Here, given a partition $\lambda$, we set 
$$
\lambda(p,q)=p(q\lambda')',
$$
where prime denotes the conjugate partition.
We observe that Fulton's conjecture implies that   if $c_{\lambda,\mu}^\nu=1$ then
$c_{\lambda(p,q),\mu(p,q)}^{\nu(p,q)}=1$, for any $p,q\geq 0$.
Our main result is that if $c_{\lambda,\mu}^\nu=2$ then 
$c_{\lambda(p,q),\mu(p,q)}^{\nu(p,q)}$ is the binomial 
$\binom{p+q}{q}$, for any $p,q\geq 0$.
\end{abstract}

\maketitle

\section{Introduction}

Fix an $n$-dimensional vector space $V$.
Given a partition  $\lambda=(\lambda_1\geq\cdots\geq\lambda_n\geq 0)$ with
$\lambda_i\in\NN$, let $S^\lambda V$ be the corresponding Schur module,
that is the irreducible $\GL(V)$-module of highest weight $\sum \lambda_i \epsilon_i$ (notation as in \cite{bourb}). 
This paper is concerned by the Littlewood-Richardson coefficients $c_{\lambda,\mu}^\nu$ 
defined by
\begin{equation}
  \label{eq:17}
  S^\lambda V\otimes S^\mu V \simeq \bigoplus_{\nu}
  \C^{c_{\lambda,\mu}^\nu} \otimes S^\nu V,
\end{equation}
where $\C^{c_{\lambda,\mu}^\nu}$ is a multiplicity space.
Given a partition $\lambda$ as above, we set 
$$
\lambda(p,q)=(p\lambda_1,\ldots,p \lambda_1,p \lambda_2,\ldots,p \lambda_2,\ldots,p \lambda_n,\ldots, p \lambda_n)
$$
where each part is repeated $q$ times.
Fulton's conjecture (see \cite{KTW,belkale:fulton,ressayre:fulton} for various proofs) can be restated as:

\begin{theo}
\label{th:Fulton}
  If $c_{\lambda,\mu}^\nu=1$ then, for any positive $p$ and $q$, we have
$$
c_{\lambda(p,q),\mu(p,q)}^{\nu(p,q)}=1.
$$
\end{theo}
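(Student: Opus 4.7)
The plan is to derive this theorem from the classical form of Fulton's conjecture --- namely the statement that $c_{\lambda,\mu}^{\nu}=1$ implies $c_{k\lambda,k\mu}^{k\nu}=1$ for every integer $k\geq 0$ --- together with the fundamental conjugation symmetry $c_{\lambda,\mu}^{\nu}=c_{\lambda',\mu'}^{\nu'}$ of Littlewood--Richardson coefficients. The key observation is the factorization
$$
\lambda(p,q)\;=\;p\cdot(q\lambda')',
$$
which expresses the operation $\lambda\mapsto\lambda(p,q)$ as the composition of a row-repetition (replacing each part $\lambda_i$ by $q$ copies, i.e.\ $\lambda\mapsto(q\lambda')'$) followed by a scalar multiplication by $p$.

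First I would use the conjugation symmetry to transfer the hypothesis: from $c_{\lambda,\mu}^{\nu}=1$ one gets $c_{\lambda',\mu'}^{\nu'}=1$. Next, apply the classical Fulton conjecture with scaling factor $k=q$ to the transposed triple, producing $c_{q\lambda',q\mu'}^{q\nu'}=1$. Transposing once more and using the elementary identity $(q\lambda')' = \lambda(1,q)$ yields
$$
c_{\lambda(1,q),\mu(1,q)}^{\nu(1,q)}=1.
$$
Finally, a second invocation of the classical Fulton conjecture, this time with scaling factor $k=p$, upgrades this to $c_{\lambda(p,q),\mu(p,q)}^{\nu(p,q)}=1$, which is precisely the desired equality.

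There is no serious obstacle in this argument; all the depth has been absorbed into the two uses of Fulton's conjecture, invoked here as a black box. The only points requiring verification are the elementary combinatorial identity $(q\lambda')'=\lambda(1,q)$ (each part of $\lambda$ gets repeated $q$ times) and the fact that Littlewood--Richardson coefficients are intrinsic combinatorial quantities, so temporarily enlarging the ambient rank $n$ to accommodate the transposed partitions $\lambda'$, $\mu'$, $\nu'$ causes no harm.
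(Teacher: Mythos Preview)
Your proposal is correct and follows exactly the approach sketched in the paper: the authors simply note that the ordinary Fulton conjecture is the case $q=1$ and that the general case follows from the conjugation symmetry $c_{\lambda',\mu'}^{\nu'}=c_{\lambda,\mu}^{\nu}$. Your write-up fills in the details of this sketch (conjugate, scale by $q$, conjugate back to get $\lambda(1,q)$, then scale by $p$), and there is nothing to add.
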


The ordinary formulation of Fulton's conjecture corresponds to the case
$q=1$. The general case follows from the equality
$c_{\lambda',\mu'}^{\nu'} = \clmn$, where $\lambda'$ denotes the conjugated partition of $\lambda$.
This ordinary version has an extension to the case $c_{\lambda,\mu}^\nu=2$,
see \cite{ikenmeyer}
and \cite[Theorem 1.1 and Corollary 9.4]{sherman} for a generalization in the context of quivers:

\begin{theo}
\label{th:Iken}
  If $\clmn=2$ then, for any positive integers $p,q$, we
  have
$$ 
c_{\lambda(p,1),\mu(p,1)}^{\nu(p,1)}=p+1
\mbox{\ \ and\ \ }
c_{\lambda(1,q),\mu(1,q)}^{\nu(1,q)}=q+1.
$$
\end{theo}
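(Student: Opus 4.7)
The natural strategy is to realize the single-dilation function $p\mapsto c_{p\lambda,p\mu}^{p\nu}$ as the Ehrhart polynomial of a polytope attached to $(\lambda,\mu,\nu)$ and to show that under the hypothesis $\clmn=2$ this polytope is one-dimensional.

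First I would reduce the two claims to a single one. Conjugation of partitions preserves LR coefficients, and a direct check gives $\lambda(1,q)'=q\lambda'=\lambda'(q,1)$. Applying the identity $c_{\lambda(p,1),\mu(p,1)}^{\nu(p,1)}=p+1$ to the triple $(\lambda',\mu',\nu')$ (which also has LR coefficient $2$) yields the companion statement $c_{\lambda(1,q),\mu(1,q)}^{\nu(1,q)}=q+1$. So only the first equality needs proof.

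I would then fix a polytope model — for instance the Knutson--Tao hive polytope or the Berenstein--Zelevinsky polytope — in which $\clmn$ equals the number of lattice points of an integral polytope $P=P(\lambda,\mu,\nu)\subset\RR^N$ whose defining inequalities depend linearly on $(\lambda,\mu,\nu)$. Linearity gives $P(p\lambda,p\mu,p\nu)=pP(\lambda,\mu,\nu)$, and the Knutson--Tao saturation theorem together with the polynomiality result of Rassart (or Derksen--Weyman) identifies $p\mapsto c_{p\lambda,p\mu}^{p\nu}$ with the Ehrhart polynomial of $P$, of degree $\dim P$. Under the hypothesis $\clmn=2$, the polytope $P$ contains exactly two lattice points $v_0,v_1$. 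Writing $w=v_1-v_0$, the vector $w$ is primitive (otherwise an intermediate lattice point on $[v_0,v_1]\subset P$ would give a third lattice point in $P$), and the $p+1$ distinct lattice points $pv_0+jw$, $j=0,\dots,p$, lie in $pP$ by convexity. This gives the easy lower bound $c_{p\lambda,p\mu}^{p\nu}\geq p+1$ and shows $\dim P\geq 1$.

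The matching upper bound is the main obstacle and the substantive content of the theorem: I must show $P$ is exactly the segment $[v_0,v_1]$, equivalently that its Ehrhart polynomial has degree $1$. I see two natural routes to this rigidity statement. Combinatorially, following Ikenmeyer, I would work directly with integral hives; given any integral hive $h$ with boundary $(\lambda,\mu,\nu)$, I would use an exchange argument comparing $h$ to the two known hives $h_0,h_1$ to show $h\in[h_0,h_1]$, forcing $P$ to be a $1$-simplex. Geometrically, following Sherman, I would interpret $\clmn$ as $\dim H^0(M_\theta,\cL)$ on a moduli space of semistable quiver representations and argue that when this dimension equals $2$, $M_\theta$ is forced to be isomorphic to $\PP^1$, whence $\dim H^0(M_\theta,\cL^{\otimes p})=p+1$ by Riemann--Roch. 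In both approaches, the delicate step is ruling out a second independent direction in $P$ starting from the mere assumption of two lattice points, and this is where I expect to spend the bulk of the effort.
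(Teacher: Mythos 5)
Your reduction of the second equality to the first via conjugation is correct and is exactly the mechanism the paper itself invokes (for Theorem \ref{th:Fulton}) through the identity $c_{\lambda',\mu'}^{\nu'}=\clmn$ together with $\lambda(1,q)'=q\lambda'=\lambda'(q,1)$. Note, however, that the paper does not prove Theorem \ref{th:Iken} at all: it is quoted from Ikenmeyer and from Sherman. So the only question is whether your argument stands on its own.

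It does not yet. The easy half --- producing the $p+1$ lattice points $pv_0+j(v_1-v_0)$, $j=0,\dots,p$, in the dilated hive polytope, hence $c_{p\lambda,p\mu}^{p\nu}\geq p+1$ --- is fine. But the matching upper bound, namely that a hive polytope with exactly two lattice points is necessarily the segment $[v_0,v_1]$ (equivalently that its Ehrhart polynomial has degree $1$, equivalently that the GIT quotient is $\PP^1$ with polarization of degree $1$), is the entire content of the theorem, and you explicitly defer it as the place ``where I expect to spend the bulk of the effort.'' A general integral polytope with two lattice points need not be a segment: a thin two-dimensional simplex with only two lattice points has quadratically growing Ehrhart function. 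Ruling this out requires genuinely exploiting the special structure of hive polytopes (Ikenmeyer's analysis) or of the invariant-theoretic quotient (Sherman's argument, whose bidilated analogue is precisely what Section 5.1 of this paper carries out: Teleman quantization to identify $\Ho(X,\Li)^G$ with sections over the semistable locus, identification of $X^{\rm ss}(\Li)\quot G$ with $\PP^1$, and the degree computation $d=1$ via connectedness of stabilizers and Kempf's descent criterion). Until one of these routes is actually executed, your proof has a gap precisely at its crux.
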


Our main result is an extension of Theorem~\ref{th:Iken} in the spirit
of Theorem~\ref{th:Fulton}:

\begin{theo}
 \label{main-theo}
 If $\clmn=2$ then, for any positive integers $p,q$, we have
 $$
 c_{\lambda(p,q),\mu(p,q)}^{\nu(p,q)}=\binom{p+q}{q}.
 $$
\end{theo}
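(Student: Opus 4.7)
The plan is to combine Theorem~\ref{th:Iken} with a geometric/GIT analysis of the bidilation, realizing $c_{\lambda(p,q),\mu(p,q)}^{\nu(p,q)}$ as the dimension of a space of $\hG$-invariant sections of a line bundle on a triple partial flag variety, and identifying the associated GIT quotient.

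I would first realize the LR coefficient geometrically: set $\hG = \GL(qn,\CC)$ and let $\hP \subset \hG$ be a standard parabolic whose Levi blocks match the block structure of $\lambda(p,q)$. For a suitable $\hG$-linearized line bundle $\Li_{p,q}$ on $X_{p,q} := (\hG/\hP)^3$ one has $c_{\lambda(p,q),\mu(p,q)}^{\nu(p,q)} = \dim H^0(X_{p,q}, \Li_{p,q})^{\hG}$. By hypothesis the $(1,1)$ section space is $2$-dimensional, and Theorem~\ref{th:Iken} provides dimensions $p+1$ and $q+1$ for the $(p,1)$ and $(1,q)$ coordinate slices. These match the dimensions of $H^0(\PP^1, \cO(p))$ and $H^0(\PP^q, \cO(1))$, suggesting that the GIT quotient in the $(1,q)$ direction is $\PP^q$ with the natural line bundle and that dilating by $p$ simply multiplies this line bundle.

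The central claim to prove is that under $\clmn = 2$, the GIT quotient $X_{p,q}^{ss}\quot\hG$ is isomorphic to $\PP^q$ with $\Li_{p,q}$ descending to $\cO(p)$; the theorem then follows from $\dim H^0(\PP^q, \cO(p)) = \binom{p+q}{q}$. To carry out this identification I would construct a $\hG$-equivariant morphism $X_{p,q}^{ss} \to \PP^q$, exploit the known answer in the $(1,q)$ slice from Theorem~\ref{th:Iken}, and use an interpolation/flatness argument to extend to general $p$.

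The main obstacle is the GIT-quotient identification. A complementary combinatorial attack is to produce $\binom{p+q}{q}$ linearly independent invariant sections of $\Li_{p,q}$, indexed by monomials of bidegree $(p,q)$ in two variables (equivalently, by lattice paths in a $p \times q$ rectangle), built as products of the explicit sections furnished by the two coordinate-slice instances of Theorem~\ref{th:Iken}. Proving their linear independence---a rigidity statement that should be forced by the coefficient-$2$ hypothesis---together with the matching upper bound $\dim H^0 \leq \binom{p+q}{q}$ coming from a deformation argument reducing to these base cases, is what I expect to be the hardest technical step.
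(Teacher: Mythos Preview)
Your target is correct---the paper does show that $X(q)^{\rm ss}(\Li(q))\quot G(q)\simeq\PP^q$ with $\Li(q)^{\otimes p}$ descending to $\cO_{\PP^q}(p)$---but the mechanism you propose is not the one that works, and the alternatives you sketch are not viable as stated. The sections for $(p,1)$ and $(1,q)$ live on different varieties ($X$ versus $X(q)$), so ``products of the explicit sections'' from the two coordinate slices has no clear meaning; and there is no ``interpolation/flatness'' argument linking the two. The actual key construction is an embedding $\iota_q:X^q\hookrightarrow X(q)$ onto an irreducible component $C$ of the fixed locus of a regular one-parameter subgroup $\tau\subset\GL(E)\subset G(q)$, together with the computation $N_{G(q)}(C)=G^q\rtimes\fS_q$. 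Since $X^{\rm ss}(\Li)\quot G\simeq\PP^1$ (Sherman), one gets $C^{\rm ss}\quot N_{G(q)}(C)\simeq(\PP^1)^q\quot\fS_q\simeq\PP^q$. The crux is then the injectivity of the restriction $\Ho(X(q),\Li(q)^{\otimes p})^{G(q)}\to\Ho(C,\Li(q)^{\otimes p})^{N_{G(q)}(C)}$, which follows once one knows the induced map on GIT quotients is surjective; that surjectivity rests on a modality estimate $\Eqd(X(q),G(q))\leq q$ under the hypothesis $\Eqd(X,G)=1$ (Proposition~\ref{prop:modality}). None of these ingredients appear in your outline.

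There is a second gap: your plan tacitly assumes the generic situation $\Eqd(X,G)=1$, but $\Li$ may lie on the boundary of the $G$-ample cone $\ac^G(X)$. The paper handles this by induction on $n$: on a boundary facet given by subsets $I,J,K$ with $\sigma_I\cup\sigma_J\cup\sigma_K=[pt]$, multiplicativity gives $2=\Clmn=c_{\lambda_I,\mu_J,\nu_K}\cdot c_{\lambda_{\overline I},\mu_{\overline J},\nu_{\overline K}}$, and one must check (via Fulton's conjecture applied to the Schubert product in $\Gr(qr,qn)$) that the same factorization persists after bidilation. Without this reduction the argument is incomplete even at the level of strategy.
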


Here, $\binom{p+q}{q}$ stands for the binomial.
Ikenmeyer proved Theorem~\ref{th:Iken} using convex geometry and integral points counting, whereas we use Geometric Invariant Theory.
An example of a triple of partitions $(\lambda,\mu,\nu)$ such that
$c_{\lambda(p,q),\mu(p,q)}^{\nu(p,q)}=\begin{pmatrix} p+q\\q \end{pmatrix}$ is given in
\cite[Example 6.2]{KTW}.

\bigskip

The main idea for the value $\clmnpq=\binom{p+q}{p}$ is the following (although the proof of the following claims
is less direct than what is presented in this introduction). First, letting $G=\GL_n$,
we interpret the coefficient $\clmn$ as the dimension of a space
of $G$-invariant sections of a line bundle $\cL$ on the product $X$ of three flag varieties under the group $G$.
The coefficient $\clmnp$ is then simply the dimension of $\Ho(X,\cL^{\otimes p})^G$. The coefficient $\clmnpq$
in turn has a geometrical definition dilating the flag variety $X$. More precisely, we replace $X$ by $X(q)$ which is
a product of partial flag varieties for $G(q):=\GL_{nq}$, and we replace $\cL$ by some line bundle $\cL(q)$. We get:
$$
\clmnpq = \dim \Ho(X(q),\cL(q)^{\otimes p})^{G(q)}.
$$

Using properties of the Horn cone proved in \cite{dw,ressayre-birational},
we observe that if $(\lambda,\mu,\nu)$ is not general, then
$\clmn$ is in fact the product of two Littlewood-Richardson coefficients for smaller linear groups, and we conclude
by induction.

By results of Ikenmeyer and Sherman \cite{ikenmeyer,sherman2015geometric}, the polarized GIT-quotient
$X^{\rm ss}(\Li) \quot G$ is isomorphic to $(\PP^1,\cO_{\PP^1}(1))$. The equality $\clmnp=p+1$ is explained by
the equality $\dim \Ho(\PP^1,\cO_{\PP^1}(p))=p+1$.

We produce in \eqref{eq:defiota} an inclusion of $X^q$ in $X(q)$.
If $(\lambda,\mu,\nu)$ is general, then the codimension of a general $G$-orbit in $X$ has codimension $1$, and
we show that the codimension of a general $G(q)$-orbit in $X(q)$ will have codimension $q$, from which we deduce
that the restriction induces an isomorphism
$$
\Ho(X(q),\Li(q)^{\otimes p})^{G(q)} \longto
\Ho(X^q,\boxtimes^q \Li^{\otimes p})^{N_{G(q)}(X^q)},
$$
where $N_{G(q)}(X^q)$ denotes the stabilizer of $X^q$ in $G(q)$.
Therefore, understanding the GIT-quotient $X(q)^\rss(\Li(q)) \quot G(q)$ comes down to understanding the GIT-quotient
$(X^q)^\rss(\Li(q))  \quot N_{G(q)}(X^q)$.
The action of $N_{G(q)}(X^q)$ on $X^q$ is given by the action of $G^q$ on $X^q$
and the permutation of the $q$ factors, from which it follows that
the quotient $X(q)^\rss(\Li(q))  \quot G(q)$ is isomorphic to the quotient of $\left ( X ^\rss(\Li)  \quot G \right )^q$ by the symmetric
group $\fS_q$, which is ${(\PP^1)}^q \quot \fS_q$, namely $\PP^q$.

It follows that the polarized GIT-quotient $X(q)^\rss(\Li(q))  \quot G(q)$ is $(\PP^q,\cO_{\PP^q}(1))$
(see Corollary \ref{coro:Pq}), and taking the $p$-th
power of the polarization, we obtain our binomial coefficient as the number
$\dim \Ho(\PP^q,\cO_{\PP^q}(p))$.

\setcounter{tocdepth}{1}
\tableofcontents

\section{$G$-ample cone of flag varieties}

\subsection{GIT-quotient}

Let $G$ be a complex connected reductive group acting on an irreducible projective variety $X$.  
Let $\Pic^G(X)$ denote the group of $G$-linearized line bundles on $X$.
For $\Li\in\Pic^G(X)$, $\Ho(X,\Li)$ denotes the $G$-module of regular sections of $\Li$ and
 $\Ho(X,\Li)^G$ denotes  the subspace of $G$-equivariant sections.
For any $\Li\in\Pic^G(X)$, 
we set 
$$
X^{\rm ss}(\Li,G)= X^{\rm ss}(\Li)=\{x\in X \,:\,\exists n>0{\rm\ and\ }\sigma\in\Ho(X,\Li^{\otimes n})^G 
\  {\rm s.\,t.}\ \sigma(x)\neq 0\}.
$$
Note that this definition of $X^{\rm ss}(\Li)$ coincides with that of \cite[Definition 1.7]{GIT} if $\Li$ is ample
but not in general.


Assuming that $X^{\rm ss}(\Li)$ is not empty, 
consider the following projective variety
\begin{equation}
 \label{equa:git-quotient}
 X^{\rm ss}(\Li)\quot G:={\rm Proj}\left (\bigoplus_{n\geq
     0}\Ho(X,\Li^{\otimes n})^G\right ),
 \vspace{-3mm}
\end{equation}
and the natural $G$-invariant morphism
$$
\pi\,:\,X^{\rm ss}(\Li)\longto X^{\rm ss}(\Li)\quot G.
$$
If $\Li$ is ample then $\pi$ is a good quotient and, in particular,
the points in $X^{\rm ss}(\Li)\quot G$
correspond to the closed $G$-orbits in $X^{\rm ss}(\Li)$.

\subsection{The $G$-ample cone}
\label{sec:ample-GIT-cone}

We assume here that $\Pic^G(X)$ has finite rank and we consider the rational vector
space $\Pic^G(X)_\QQ:=\Pic^G(X)\otimes_\ZZ\QQ$.
Since $X^{\rm ss}(\Li)=X^{\rm ss}(\Li^{\otimes n})$ for any positive integer $n$, 
 $X^{\rm ss}(\Li)$ can be defined for any element $\Li$ in  $\Pic^G(X)_\QQ$.
The set of ample line bundles in $\Pic^G(X)$ generates an open convex cone 
$\Pic^G(X)_\QQ^+$ in  $\Pic^G(X)_\QQ$.  The following cone was defined in 
\cite{DH} and is called the \emph{$G$-ample cone}:
\begin{equation}
 \label{equa:def-ac} 
 \ac^G(X):=\{\Li\in\Pic^G(X)_\QQ^+\,:\,X^{\rm ss}(\Li)\neq\emptyset\}.
\end{equation}
Accordingly,
a line bundle $\Li\in \Pic^G(X)$ is said to be {\it $G$-ample} if
$X^{\rm ss}(\Li)$ is not empty.
Since the product of two nonzero $G$-equivariant sections of two line 
bundles is a nonzero $G$-equivariant section of the tensor product of the two line bundles, 
$\ac^G(X)$ is convex: see \cite[Propositions 3.1.2, 3.1.3 and Definition 3.2.1]{DH}.

\medskip

Let $\modx$ denote the minimal codimension of 
$G$-orbits in $X$.  
By \cite[Proposition 4.1]{algo}, the expected quotient dimension is the maximal dimension of the quotients:

\begin{prop}
\label{prop:modmax}
Assume that $X$ is smooth. The maximal dimension of varieties
$X^{\rm ss}(\Li)\quot G$ for $\Li\in\Pic^G(X)$ is equal to
$\modx$. Moreover, for any 
$\Li$ in the relative interior of $\acgx$, $\dim(X^{\rm ss}(\Li)\quot
G)=\modx$.
\end{prop}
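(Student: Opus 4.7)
The plan is to establish matching upper and lower bounds on $\dim(X^{\rm ss}(\Li)\quot G)$: a generic fiber dimension argument for the upper bound (valid for every $\Li$), and an argument via stable points for the lower bound when $\Li$ lies in the relative interior of $\acgx$.

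For the upper bound, take any $\Li\in\Pic^G(X)$ with $X^{\rm ss}(\Li)\neq\emptyset$. Since $X$ is smooth and irreducible, the locus $U=\{x\in X:\dim(G\cdot x)=\dim X-\modx\}$ of maximum-dimensional orbits is open and dense. As a nonempty open subset of the irreducible $X$, the semistable locus $X^{\rm ss}(\Li)$ is also dense, so $U\cap X^{\rm ss}(\Li)$ is dense in $X^{\rm ss}(\Li)$. The natural $G$-invariant morphism $\pi : X^{\rm ss}(\Li)\to X^{\rm ss}(\Li)\quot G$ is dominant and each generic fiber contains a generic $G$-orbit of dimension $\dim X-\modx$, so by the generic fiber dimension formula,
$$
\dim(X^{\rm ss}(\Li)\quot G)\,\leq\,\dim X - (\dim X-\modx)\,=\,\modx.
$$

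For the lower bound with $\Li$ in the relative interior of $\acgx$, the strategy is to exhibit stable points (in the standard GIT sense) where the fibers of $\pi$ coincide with single $G$-orbits. The key input is the GIT cone decomposition from \cite{DH}, subdividing the relative interior of $\acgx$ into finitely many open chambers. For $\Li$ in an open chamber, $X^s(\Li)=X^{\rm ss}(\Li)$ is nonempty, stabilizers of stable points are finite modulo the kernel of the $G$-action, and the geometric quotient $X^s(\Li)\to X^{\rm ss}(\Li)\quot G$ has generic fibers of dimension exactly $\dim X-\modx$, forcing $\dim(X^{\rm ss}(\Li)\quot G)=\modx$. For $\Li$ on a wall interior to $\acgx$, one compares with a linearization $\Li_0$ in an adjacent chamber: stability being an open condition on the parameter, $X^s(\Li_0)\subseteq X^{\rm ss}(\Li)$, and this inclusion induces a dominant morphism between the two quotients that preserves the dimension $\modx$.

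The main obstacle is precisely this wall case. Wall linearizations need not admit stable points, and because $\Li_0$ and $\Li$ are distinct line bundles a direct morphism between their GIT quotients is not automatic; one must construct it using invariant sections of common tensor powers $\Li^{\otimes a}\otimes\Li_0^{\otimes b}$ and invoke the cone structure of $\acgx$ from \cite{DH} to conclude that the dimension of the quotient does not drop as one moves from a chamber onto a wall that still lies in the relative interior of $\acgx$.
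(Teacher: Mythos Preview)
The paper does not prove this proposition at all; the sentence preceding it reads ``By \cite[Proposition~4.1]{algo}, the expected quotient dimension is the maximal dimension of the quotients,'' and the statement is then recorded without argument. So your attempt must be judged on its own, not against a proof in this paper.

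Your upper bound is correct and standard: each fiber of $\pi$ is $G$-stable, the generic fiber meets the open locus of orbits of maximal dimension, hence has dimension at least $\dim X-\modx$.

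The lower bound, however, has a genuine gap. You claim that for $\Li$ in an open chamber one has $X^s(\Li)=X^{\rm ss}(\Li)$ with stabilizers finite modulo the kernel of the action, and then that the generic fiber of $\pi$ is a single orbit of dimension $\dim X-\modx$. But ``finite stabilizer modulo the kernel'' forces the generic stabilizer to have the same dimension as the kernel, and nothing in the hypotheses guarantees this. Take $G=\SL_2$ acting on $X=\PP(S^2\CC^2\oplus\CC)$, where $\CC$ carries the trivial representation. Here $\modx=1$, the invariant ring is $\CC[c,\mathrm{disc}]$, the quotient is $\PP^1$, and the proposition holds; yet the generic stabilizer is a one-dimensional orthogonal group while the kernel is $\{\pm I\}$, so there are \emph{no} properly stable points in your sense. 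Since $\Pic^G(X)\simeq\ZZ$, the $G$-ample cone is a single ray and every ample $\Li$ lies in its relative interior, so your chamber argument (and a fortiori your wall argument) simply does not apply. The Dolgachev--Hu statement ``semistable $=$ stable in a chamber'' that you invoke is proved under exactly this kind of finiteness hypothesis on generic stabilizers.

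What one actually needs for the lower bound is the weaker assertion that, for $\Li$ in the relative interior of $\acgx$, a generic $G$-orbit of maximal dimension is \emph{closed} in $X^{\rm ss}(\Li)$. Once this is known, the fiber of $\pi$ through a generic point is reduced to that single orbit (any other orbit in the fiber would have to contain it in its closure, forcing equality by a dimension count), and the equality $\dim(X^{\rm ss}(\Li)\quot G)=\modx$ follows. Establishing this closedness is the substantive step carried out in the cited reference, and it does not pass through the existence of points with finite isotropy.
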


\subsection{Restriction to the $\tau$-fixed locus}

Let $\tau:\C^* \to G$ be a one parameter
subgroup. Let $C$ be an irreducible component of the $\tau$-fixed
point set $X^\tau$.
Let $\Li$ be an ample $G$-linearized line bundle.  

Since the centralizer $G^\tau$ of $\tau$ is connected, it acts on $C$. Moreover, 
by Luna (see e.g. \cite[Proposition~8]{GITEigen}), we have
\begin{equation}
C^\rss(\Li_{|C},G^\tau) = X^\rss(\Li,G) \cap C.\label{eq:1}
\end{equation}
Here $\Li_{|C}$ stands for the restriction of $\Li$ to $C$.
Thus, the following defines a morphism
\begin{equation}
\label{def-pi}
p\,:\,C^\rss(\Li_{|C},G^\tau)\quot G^\tau\longto X^\rss(\Li,G)\quot G\,.
\end{equation}

\begin{lemma}
\label{lem:pifinite} The morphism
$p$ is finite on its image.
\end{lemma}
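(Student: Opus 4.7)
The plan is the following. Since $\Li$ is ample on $X$ and $C$ is closed in $X$, $\Li_{|C}$ is ample on $C$, so both GIT quotients appearing in the statement are projective varieties; the morphism $p$ is then automatically proper, and to prove it is finite onto its image it suffices to bound each of its fibers by a finite set.

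Fix $y$ in the image of $p$. By the good quotient property (using that $\Li$ is ample), $y$ is represented by a unique closed $G$-orbit $\Omega\subset X^\rss(\Li)$ lying in $\pi^{-1}(y)$, and $p^{-1}(y)$ corresponds to the closed $G^\tau$-orbits $G^\tau\cdot z$ in $C^\rss(\Li_{|C},G^\tau)$ whose $G$-saturation has closure containing $\Omega$. The crucial step I would carry out first is to show that any such $z$ in fact lies in $\Omega$: assuming the contrary, $G\cdot z$ is not closed in $X^\rss(\Li)$, and the Hilbert--Mumford criterion produces a one parameter subgroup $\mu$ of $G$ with $\lim_{t\to 0}\mu(t)\cdot z \in \Omega$. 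Appealing to Kempf's uniqueness of the optimal destabilizing one parameter subgroup up to $G_z$-conjugacy, and noting that $\tau\subset G_z$, one can replace $\mu$ by a $G_z$-conjugate commuting with $\tau$, i.e.\ a one parameter subgroup of $G^\tau$. Then $\mu(t)\cdot z$ stays in $X^\tau$ for every $t$, so its limit lies in $\Omega\cap C$; since $z\notin\Omega$, this limit is outside $G^\tau\cdot z$, contradicting the assumed closedness of $G^\tau\cdot z$ in $C^\rss(\Li_{|C},G^\tau)$.

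Granted this, $p^{-1}(y)$ injects into the set of $G^\tau$-orbits of $\Omega\cap C$, so the remaining task is to show that this latter set is finite. Picking $x_0\in\Omega\cap C$ and setting $H:=G_{x_0}$, Matsushima's theorem yields that $H$ is reductive (since $\Omega$ is a closed orbit in the semistable locus of an ample linearization), and $x_0\in X^\tau$ forces $\tau\subset H$. Under the identification $\Omega\cong G/H$, the intersection $\Omega\cap X^\tau$ is $\{gH : g^{-1}\tau g\subset H\}$, and its $G^\tau$-orbits are naturally indexed by $H$-conjugacy classes of one parameter subgroups of $H$ that are $G$-conjugate to $\tau$. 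Extending a maximal torus $T_H\subset H$ to a maximal torus $T\subset G$, such classes admit representatives in the finite set $(W_G\cdot\tau)\cap X_*(T_H)$, yielding the desired finiteness.

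The hard part will be the Kempf-type argument in the first step: arranging the destabilizing one parameter subgroup to already lie in $G^\tau$. The remaining ingredients (properness from projectivity, the good-quotient description of fibers, Matsushima's theorem, and the finiteness of Weyl orbits on the cocharacter lattice) are standard once that point is secured.
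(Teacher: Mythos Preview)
Your outline is correct and would yield a valid proof, but the paper takes a much shorter route: it simply observes that the quotient map $\pi:X^{\rm ss}(\Li)\to X^{\rm ss}(\Li)\quot G$ is affine (since $\Li$ is ample) and then invokes Luna's theorem \cite[Theorem~2]{luna:invariants}, which in the affine setting directly gives finiteness of the map from the $G^\tau$-quotient of the $\tau$-fixed locus to the $G$-quotient. What you propose is essentially a from-scratch proof of this special case of Luna's result: your Kempf-type step (forcing a closed $G^\tau$-orbit in $C^{\rm ss}$ to lie in the closed $G$-orbit $\Omega$) and your orbit count on $\Omega\cap C$ via $H$-conjugacy classes of cocharacters are precisely the ingredients behind Luna's argument. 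One small correction: Kempf's uniqueness is up to conjugacy in the optimal parabolic $P_z$ (which contains $G_z$), not merely in $G_z$; the clean way to arrange $\mu\subset G^\tau$ is to extend the torus $\tau\subset G_z\subset P_z$ to a maximal torus of $P_z$ and take the unique optimal representative there. With that adjustment your plan goes through; it buys self-containment at the cost of length, whereas the paper's citation isolates the general principle in one line.
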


\begin{proof}
The quotient map $\pi\,:\, X^{\rm ss}(\Li)\longto X^\rss(\Li,G)\quot
G$ being affine, 
this follows from \cite[Theorem 2]{luna:invariants}.
%
\end{proof}

\subsection{Notations about flag varieties and line bundles on them}
\label{sec:flag}

Let $V$ be a  finite dimensional complex vector space.
Fix a basis $(e_1,\dots,e_n)$ of $V$ and identify the linear group $\GL(V)$ with
$\GL_n(\CC)$. 
Let $T \subset \GL(V)$ (resp. $B \subset \GL(V)$) be the maximal torus
(resp. Borel subgroup) containing 
all diagonal (resp. upper triangular) matrices. 
Let $\epsilon_i\,:\,T\longto\CC^*$ denote the character mapping $t\in T$ on
its $i$-th diagonal entry.
Note that $(\epsilon_i)_{1\leq i\leq n}$ forms a $\ZZ$-basis of the
character group $X(T)$ of $T$.
Moreover, the set $X(T)^+$ of dominant characters identifies with 
$$
\Lambda_n^+=\{(\lambda_1,\ldots,\lambda_n) \in\ZZ^n {\rm\ with\ }
\lambda_1 \geq \lambda_2 \geq \cdots \geq \lambda_n\},
$$
by mapping $(\lambda_1,\ldots,\lambda_n)$ to $\sum_i\lambda_i\epsilon_i$.
Let $\varpi_i=\epsilon_1+\cdots+\epsilon_i$ be the $i$-th fundamental
weight. 

Given integers $0<a_1<\cdots<a_r<n$, we let $\Fl(a_1,\ldots,a_r;V)$
denote the corresponding partial flag variety:
$$
\Fl(a_1,\ldots,a_r;V)=\{V_1\subset\cdots\subset V_r\subset V\,:\,\dim(V_i)=a_i\}.
$$
This will also be denoted $\Fl(A;V)$ where $A=\{a_1,\ldots,a_r\}$.
The standard base point $\xi_0$ in $\Fl(a_1,\ldots,a_r;V)$ is defined
by letting $V_i$ be the span of $e_j$'s for $j\leq a_i$.
The stabilizer of $\xi_0$ in $\GL(V)$ is denoted by $P$. 
Moreover $\varpi_i$ extends to a character of $P$ if and only if $i
\in A$. 
In this case, this defines a $G$-linearized line bundle $\GL(V) \times^P \C_{-\varpi_i}$
on $\Fl(A;V)$, and its space of sections is $\wedge^i V^*$, as a $G$-representation.

More generally, given $(\lambda_1,\dots,\lambda_n)\in\Lambda_n^+$,
we define the line
bundle
\begin{equation}
 \label{equa:def-L-flag}
 \Li_\lambda=\GL(V) \times^P \C_{-\lambda} \mbox{ where }
 \lambda = \sum_{i=1}^{n} \lambda_i \epsilon_i = \sum_{i=1}^{n} (\lambda_i-\lambda_{i+1}) \varpi_i\,,
\end{equation}
with the convention $\lambda_{n+1}=0$.
It is well-defined if and only if $\lambda$ is a weight of $P$,
which means that
\begin{equation}
 \label{equa:line-defined}
 0<i<n \mbox{ and } \lambda_i>\lambda_{i+1} \Longrightarrow i \in A.
\end{equation}
Moreover, $\Li_\lambda$ is ample on $\Fl(A;n)$ if and only if this is an equivalence:
\begin{equation}
 \label{equa:line-ample}
 0<i<n \mbox{ and } \lambda_i>\lambda_{i+1} \Longleftrightarrow i \in A.
\end{equation}

Borel-Weil theorem says that
\begin{equation}
 \Ho(\Fl(A;V),\Li_\lambda) = S^\lambda V^*\,,
\end{equation}
where $S^\lambda$ is the Schur functor associated to $\lambda$.

Finaly, if $X=\Fl(A^1;V) \times \cdots \times \Fl(A^k;V)$ is a product of $k$ flag varieties, and
$\lambda^1,\dots,\lambda^k$ are in $\Lambda_n^+$  such that each pair $(\lambda^j,A^j)$ satisfies
\eqref{equa:line-defined}, we define the following line bundle on $X$:
\begin{equation}
 \label{equa:def-L-X}
 \Li_{(\lambda^1,\dots,\lambda^k)} = \Li_{\lambda^1} \boxtimes \cdots \boxtimes \Li_{\lambda^k}.
\end{equation}
Thus, $\Ho(X,\Li_{(\lambda^1,\dots,\lambda^k)})=S^{\lambda^1}V^*\otimes\cdots\otimes S^{\lambda^k}V^*$.

\subsection{The Horn cone of $\GL_n$}
\label{sec:horn}

Let $k$ be an integer. The cone inside ${(\QQ^n)}^k$ generated by the $k$-uples 
$(\lambda^j)$ in $\Lambda_n^+$ such that 
$(S^{\lambda^1} V^* \otimes \cdots \otimes S^{\lambda^k} V^*)^{\GL(V)} \neq \{0\}$
is called the \emph{Horn cone} and has a description that we now recall.

Let $I \subset \{1,\ldots,n\}$ be a subset with $r$ elements. The linear subspace $V_I \subset V$ generated
by the base vectors $e_i$ for $i \in I$ defines a $T$-fixed point in the Grassmannian $\G(r;V)$.
The cohomology class of the closure of the $B^-$-orbit through this point will be denoted by $\sigma_I$.
Here $B^-$ denotes the Borel subgroup of $\GL(V)$ consisting in lower
triangular matrices.

For $\lambda=(\lambda_1 \geq \cdots \geq \lambda_n)$ let $|\lambda| = \sum_i \lambda_i$.
By \cite{klyachko,belkale:horn}, the $k$-uple $(\lambda^j)$ belongs to the Horn
cone if and only if 
$$\sum_{j=1}^k|\lambda^j|=0,
$$
and the following holds for all integers
$r \in \{1,\ldots,n-1\}$ and all $k$-uples $(I^j)_{1 \leq j \leq k}$
of subsets of $\{1,\ldots,n\}$ with $r$ elements:
\begin{equation}
 \label{equa:horn}
 \sigma_{I^1} \cup \cdots \cup \sigma_{I^k} = [pt] \in {\operatorname H}^*(\G(r;V),\Z)
 \Longrightarrow
 \frac 1r \sum_{j=1}^k |\lambda^j_{I^j}| \leq \frac 1n \sum_{j=1}^k |\lambda^j|\,.
\end{equation}
Here,
$\lambda_I$ denotes the partition obtained by taking the parts $\lambda_i$ for $i \in I$. Moreover, by
\cite{KTW} (see also \cite{GITEigen}), for such $I^1,\ldots,I^k$,
each equation $\frac 1r \sum_{j=1}^k |\lambda^j_{I^j}| = \frac 1n \sum_{j=1}^k |\lambda^j|$
defines  a face of codimension $1$ in the Horn cone.

\subsection{The $G$-ample cone of products of flag varieties}
\label{sec:ample-flag}

We now assume that
\begin{equation}
X = \Fl(A^1;V) \times \cdots \times \Fl(A^k;V)\label{eq:defXk}
\end{equation}
is a product of flag varieties homogeneous under the group $G=\GL(V)$.

\begin{prop}
\label{prop:ample-cone}
A $G$-equivariant line bundle $\Li_{(\lambda^j)}$ on $X$ given by a
$k$-uple $(\lambda^j)_{1 \leq j \leq k}$ in $(\Lambda_n^+)^k$
is $G$-ample if and only if $\sum_{j=1}^k |\lambda^j|=0$, and 
\begin{equation}
 \label{equa:cone}
 \left \{
 \begin{array}{l}
  \lambda^j_i > \lambda^j_{i+1} \Longleftrightarrow i \in A^j \\
  \sigma_{I^1} \cup \cdots \cup \sigma_{I^k} = [pt] \Longrightarrow
  \frac 1r \sum_{j=1}^k |\lambda^j_{I^j}| \leq \frac 1n \sum_{j=1}^k |\lambda^j|
 \end{array}
 \right .
\end{equation}
\end{prop}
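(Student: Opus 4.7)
The plan is to combine two ingredients already assembled earlier in the paper: the ampleness criterion \eqref{equa:line-ample} for an individual line bundle $\Li_\lambda$ on a partial flag variety, and the Horn cone description from Section~\ref{sec:horn} of those tuples $(\lambda^j)$ for which $\bigl(\bigotimes_j S^{\lambda^j}V^*\bigr)^G$ is non-zero. Accordingly I would split the statement into two independent equivalences, one capturing the first line of \eqref{equa:cone} and the other the second line together with the condition $\sum_j|\lambda^j|=0$.

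For ampleness: since $\Li_{(\lambda^1,\dots,\lambda^k)} = \Li_{\lambda^1}\boxtimes\cdots\boxtimes\Li_{\lambda^k}$ is an external tensor product on the product $X = \prod_j \Fl(A^j;V)$, it is ample on $X$ if and only if each factor $\Li_{\lambda^j}$ is ample on $\Fl(A^j;V)$ (the ample cone of a product of projective varieties being the product of the ample cones of the factors). Applying \eqref{equa:line-ample} to each factor yields exactly the first line of \eqref{equa:cone}.

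For semistability: by the definition of $X^{\rm ss}$, one has $X^{\rm ss}(\Li_{(\lambda^j)})\neq\emptyset$ if and only if there exists $N>0$ with $\Ho(X,\Li_{(\lambda^j)}^{\otimes N})^G\neq 0$. Combining \eqref{equa:def-L-X} with Borel-Weil, this space equals $\bigl(S^{N\lambda^1}V^*\otimes\cdots\otimes S^{N\lambda^k}V^*\bigr)^G$. Its non-vanishing for some $N>0$ is, by definition and by the rational homogeneity of the Horn cone, equivalent to $(\lambda^1,\dots,\lambda^k)$ lying in the Horn cone. The description recalled in Section~\ref{sec:horn} then gives the condition $\sum_j|\lambda^j|=0$ together with the Horn inequalities \eqref{equa:horn}, namely the second line of \eqref{equa:cone}.

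The proof is essentially bookkeeping and does not involve any serious obstacle. The only point worth spelling out is the source of the sum-to-zero condition: the central one-parameter subgroup $\CC^*\cdot\mathrm{Id}\subset \GL(V)$ acts on $\bigotimes_j S^{\lambda^j}V^*$ by the character $t\mapsto t^{-\sum_j|\lambda^j|}$, so invariants can only exist when this character is trivial. Once this is noted, the characterization of the Horn cone quoted from \cite{klyachko,belkale:horn} supplies the remaining inequalities, and the two equivalences combine to give the proposition.
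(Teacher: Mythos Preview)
Your proposal is correct and follows essentially the same approach as the paper: split into ampleness (via \eqref{equa:line-ample} applied factor by factor) and non-emptiness of the semistable locus (via the Horn cone description \eqref{equa:horn}). You simply spell out a few more details than the paper does, in particular the passage through an arbitrary power $\Li^{\otimes N}$ and the role of the center in forcing $\sum_j|\lambda^j|=0$.
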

\begin{proof}
The first condition is equivalent to $\Li_{(\lambda^j)}$ being ample on $X$, and the second condition
is equivalent to
$\Ho(X,\Li)^G$ being non trivial: recall respectively \eqref{equa:line-ample} and \eqref{equa:horn}.
\end{proof}

\section{Geometric formulation of the main theorem}
\label{sec:setting}

For $\nu=(\nu_1 \geq \cdots \geq \nu_n)$ in $\Lambda_n^+$, set $\nu^\vee=(-\nu_n \geq \cdots \geq -\nu_1)$
such that $S^{\nu^\vee} V^*$ is the $\GL(V)$-representation
dual to $S^\nu V^*$. Moreover, let $\Clmn=c_{\lambda,\mu}^{\nu^\vee}$.
Since $\nu(p,q)^\vee=\nu^\vee(q,p)$,
our main Theorem \ref{main-theo} is equivalent to the implication
\begin{equation}
 \label{equa:main}
 \Clmn = 2 \Longrightarrow c_{\lambda(p,q),\mu(p,q),\nu(p,q)}=\binom{p+q}{q}.
\end{equation}

Thus, let $\lambda,\mu,\nu$ be such that $\Clmn=2$.
For $\eta\in\Lambda_n^+$, let $A(\eta)$ be the set $j \in
\{1,\ldots,n-1\}$ such that $\eta_j>\eta_{j+1}$. 
We fix the product
\begin{equation}
X=\Fl(A(\lambda);V) \times \Fl(A(\mu);V) \times
\Fl(A(\nu);V)\label{eq:defXlambda}
\end{equation}
of three partial flag varieties
and the ample line bundle $\Li:=\Li_{(\lambda,\mu,\nu)}$ on $X$, such that
$\Ho(X,\Li)=S^\lambda V^* \otimes S^\mu V^* \otimes S^\nu V^*$ (see Section \ref{sec:flag}).

Fix a $q$-dimensional vector space $E$. If $\cF=\Fl(a_1,\dots,a_s;V)$,
set $\cF(q)=\Fl(qa_1,\dots,qa_s;V\otimes E)$.
For $\eta\in\Lambda_n^+$, let $\eta(1,q)$ denote the partition with each part $\lambda_i$ repeated $q$ times.
Observe that if $\Li_\eta$ is a line
bundle (resp. ample line bundle) on $\cF=\Fl(A;V)$, then $\Li_{\eta(1,q)}$ is a line
bundle (resp. ample line bundle) on $\cF(q)$, by \eqref{equa:horn}.
Now, set $X(q) =\Fl(A(\lambda);V)(q) \times \Fl(A(\mu);V)(q) \times
\Fl(A(\nu);V)(q)$ and let $\Li(q)$ be the line bundle $\Li_{(\lambda(1,q),\mu(1,q),\nu(1,q))}$ on $X(q)$.
Then $c_{\lambda,\mu,\nu}=\dim\left (\Ho(X,\Li_{(\lambda,\mu,\nu)})^G\right)$
and
\begin{equation}
\label{equa:cxq}
c_{\lambda(p,q),\mu(p,q)\nu(p,q)}=\dim\left (\Ho(X(q),\Li(q)^{\otimes p})^{G(q)}\right ),
\end{equation}
where $G=\GL(V)$ and $G(q)=\GL(V\otimes E)$. 
Hence, our main theorem can be rephrased as the implication
\begin{equation}
 \label{equa:aim}
 \dim \left (\Ho(X,\Li)^G \right )=2 \ \Longrightarrow\  \dim \left (\Ho(X(q),\Li(q)^{\otimes p})^{G(q)}\right )=\binom{p+q}{p}.
\end{equation}

\section{Preparation of the proof of the main theorem}
\label{sec:preparation}

In this section, we fix $V$, $E$, $G$ and
$G(q)$ as in Section~\ref{sec:setting}. 
Fix also $k\geq 3$ and $A^1,\dots,A^k$ subsets of $\{1,\dots,n\}$.
Consider the varieties
$$
X=\Fl(A^1;V)\times\cdots\times \Fl(A^k;V),
$$
and
$$
X(q)=\Fl(qA^1;V\otimes E)\times\cdots\times \Fl(qA^k;V\otimes E).
$$

\subsection{A key construction}
\label{sec:defC}

A key observation is that $X^q$ embeds in
$X(q)$.
To make this embedding explicit, fix a basis $(f_1,\dots,f_q)$ of $E$.
Let $\tau$ be a regular diagonal (with respect to the fixed basis)
one-parameter subgroup of $\GL(E)$.

The group $\GL(E)$, and hence $\tau$, act on $V\otimes E$. 
A linear subspace $F\subset  V\otimes E$ is $\tau$-stable if and only
if there exist subspaces $(F_i)_{1\leq i\leq q}$ of $V$ such that 
$$
F=F_1\otimes\CC f_1\oplus\cdots\oplus F_q\otimes\CC f_q.
$$
Futhermore, the map 
$$
\begin{array}{ccl}
  \Gr(a;V)^q&\longto&\Gr(aq;V)\\
(F_i)_{1\leq i\leq q}&\longmapsto&F_1\otimes\CC f_1\oplus\cdots\oplus F_q\otimes\CC f_q,
\end{array}
$$
is an isomorphism onto an irreducible component of the $\tau$-fixed
point set.
Similarly, $\Fl(A;V)^q$ (resp. $X$) embeds in $\Fl(qA;V\otimes E)^q$
(resp. $X(q)$) as an
irreductible component of $\tau$-fixed points. Denote by
\begin{equation}
  \label{eq:defiota}
  \iota_q\,:\,X^q\longto C \subset X(q),
\end{equation}
the corresponding embedding and by $C$ its image.
It is equivariant for the action of $G(q)^\tau$, that is isomorphic to $G^q$.

\subsection{Expected quotient dimension of $X(q)$.}
\label{sec:modality}

For $X=\Fl(A^1) \times \cdots \times \Fl(A^k)$ as above, we introduce some more notation:

\begin{notation}
 \label{nota:trans}
 Given $x,y$ in $X$, write these elements as $x=(l^1,\ldots,l^k)$ and $y=(m^1,\ldots,m^k)$ with
 $l^j,m^j$ in $\Fl(A^j;V)$.
 \begin{itemize}
  \item Let $\ltrans(x,y)$ denote the subspace in $\lgl(V)$ of the endomorphisms such that for any $j \in \{1,\ldots,k\}$
  and any $i \in A^j$, $(l^j)_i$ is sent into $(m^j)_i$.
  \item Let $\lstab(x):=\ltrans(x,x)$.
  \item Let $\sgen$ be the dimension of the vector space
  $\lstab(x)$ for general $x$ in $X$.
  \item Let $\tgen$ be the dimension of the vector space
  $\ltrans(x,y)$ for general $(x,y)$ in $X^2$.
 \end{itemize}
\end{notation}

\begin{lemma}
 \label{lemm:stab-trans}
 With the above notation:
 \begin{enumerate}
  \item We always have $\tgen \leq \sgen$;
  \item If $\modx > 0$, then $\tgen \leq \sgen-1$.
 \end{enumerate}
\end{lemma}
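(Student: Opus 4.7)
My plan is to deduce both parts from the upper semi-continuity of the function
$$
\delta(x,y) := \dim \ltrans(x,y)
$$
on $X \times X$. Since $\ltrans(x,y)$ is the kernel of a family of linear maps on $\lgl(V)$ whose coefficients are polynomial in $(x,y)$, the function $\delta$ is indeed upper semi-continuous. Its generic value on $X\times X$ is $\tgen$, while $\delta(x,x) = \dim \lstab(x) = \sgen$ for $x$ in a dense open subset of $X$. Since upper semi-continuity forces the generic value of $\delta$ to be its minimum, the inequality $\tgen\leq \sgen$ of part~(1) is immediate.

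For part~(2), I argue by contradiction. Suppose $\tgen = \sgen$. Fix a generic $x_0\in X$ with $\dim\lstab(x_0)=\sgen$, and set
$$
U := \{y\in X : \delta(x_0,y)=\sgen\}.
$$
By upper semi-continuity we have $\delta \geq \sgen$ on all of $X\times X$ (the generic value $\sgen$ is the minimum), so $U$ is the complement of the closed set $\{y : \delta(x_0,y) \geq \sgen+1\}$; it is therefore open in $X$, dense, and contains $x_0$. Over $U$ the family $y\mapsto\ltrans(x_0,y)$ has constant dimension $\sgen$, hence defines a morphism
$$
\alpha : U \longrightarrow \Gr(\sgen,\lgl(V)).
$$

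The crux is the observation that
$$
\Omega := \{S\in \Gr(\sgen,\lgl(V)) : \det|_S \not\equiv 0\}
$$
is Zariski open in the Grassmannian: its complement is cut out by the vanishing of the coefficients of the degree-$n$ polynomial $\det|_S$, which are polynomial in the Plücker coordinates of $S$. Since $\lstab(x_0)$ contains the identity $I$ with $\det I = 1\neq 0$, we have $\alpha(x_0)\in\Omega$, so $\alpha^{-1}(\Omega)$ is a nonempty open subset of $U$ containing $x_0$. For any $y\in\alpha^{-1}(\Omega)$ I pick $\phi \in \ltrans(x_0,y)\cap\GL(V)$: the inclusions $\phi\cdot (l^j)_i\subset (m^j)_i$ between subspaces of the same dimension $i$ force equality, whence $\phi\cdot x_0 = y$ and $y\in G\cdot x_0$. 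Therefore $\alpha^{-1}(\Omega)\subset G\cdot x_0$, contradicting $\codim_X G\cdot x_0 = \modx > 0$.

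The conceptual content of the argument is that the identity $I\in\lstab(x_0)$ provides the obstruction: if $\ltrans(x_0,y)$ kept the same dimension as $\lstab(x_0)$ throughout a neighborhood of $x_0$, then the family would contain invertible elements arbitrarily close to $I$, forcing nearby $y$ to lie in $G\cdot x_0$ and contradicting $\modx>0$. The only potentially delicate step is the openness of $\Omega$, which however reduces to a standard Plücker-coordinate calculation.
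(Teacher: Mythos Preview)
Your proof is correct and follows essentially the same strategy as the paper: upper semi-continuity gives part~(1), and for part~(2) both arguments exploit that the condition ``$\ltrans(x,y)$ meets $\GL(V)$'' is open and is satisfied on the diagonal (thanks to the identity map), forcing a dense open set of $X$ into a single $G$-orbit when $\tgen=\sgen$. The only difference is cosmetic: you fix a generic $x_0$ and work on the slice $\{x_0\}\times X$ via a morphism to the Grassmannian, whereas the paper works symmetrically on $X\times X$ and phrases the same openness via the tautological vector bundle.
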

\begin{proof}
The function $(x,y) \mapsto \dim \ltrans(x,y)$ is upper semi-continuous on $x$ and $y$, hence the first point.
For the second point, we assume $\tgen=\sgen$ and we prove that
$\modx=0$.
Let $U$ be the set of $(x,y)\in X^2$ such that
$\dim \ltrans(x,y)=\tgen$.
The theory of linear systems implies that 
\begin{equation}
{\mathcal E}:=\{(x,y,\xi)\in U\times
\lgl(V)\;:\; \xi\in \ltrans(x,y)\} \label{eq:defE}
\end{equation}
is a vector bundle on $U$.

Therefore, the set $\Sigma$ of pairs $(x,y) \in U$ such that
$\ltrans(x,y) \subset \{\det=0\} \subset \lgl(V)$ is closed in $U$.
Since $\lstab(x)=\ltrans(x,x)$ contains the identity map of $V$ for any $x\in
X$, $\Sigma$ does not intersect the diagonal $\Delta=\{(x,x)\;:\;x\in X\}$.

But, the assumption $\tgen=\sgen$ implies that $U$ intersects $\Delta$. Hence $\Sigma$ is a proper closed subset
of $U$. For any $(x,y)\in U \setminus \Sigma$, $\ltrans(x,y)$ intersects $\GL(V)$, so $x$ and $y$ belong to the
same $\GL(V)$-orbit. Let $p_1:X \times X \to X$ be the first projection. For $x$ in $p_1(U)$ and
$y$ in the open subset $p_1^{-1}(x) \cap U$ of $p_1^{-1}(x) \simeq X$, it follows that $x$ and $y$ are in the
same $G$-orbit. Thus the $G$-orbit through $x$ is dense in X, and $\modx=0$.
\end{proof}

\begin{prop}
\label{prop:modality}
With the above notation:
\begin{enumerate}
\item If $\modx=0$ then  $\modxq=0$;
\item If $\modx>0$ then  $\modxq\leq q^2(\modx-1)+q$. 
\end{enumerate}
\end{prop}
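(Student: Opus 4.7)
The plan is to rewrite both modalities via $\Eqd(X,G)=\dim X-\dim G+\sgen$ (and its analogue for $X(q)$, with $s^{X(q)}_{\mathrm{gen}}$ denoting the dimension of the infinitesimal stabilizer of a generic point of $X(q)$), and then to estimate $s^{X(q)}_{\mathrm{gen}}$ by restricting to the image of the embedding $\iota_q\colon X^q\hookrightarrow X(q)$ of Section~\ref{sec:defC}. First I would check the dimension identities $\dim X(q)=q^2\dim X$ and $\dim G(q)=q^2\dim G$; the second is immediate, and the first follows from $\dim\Fl(A;V)=\frac12(n^2-\sum_id_i^2)$, where the $d_i$ are the successive jumps of $A$, since replacing them by $qd_i$ inside $V\otimes E$ multiplies the dimension by $q^2$. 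The proposition is then reduced to the single inequality
\[
s^{X(q)}_{\mathrm{gen}}\ \leq\ q\,\sgen+q(q-1)\,\tgen.
\]

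To prove this bound, I would compute $\lstab_{G(q)}(\pmb{x})$ at a concrete point $\pmb{x}=\iota_q(x_1,\ldots,x_q)\in C$. The partial flags of $\pmb{x}$ have the form $\bigoplus_m(l^j_m)_i\otimes f_m$. Writing an element $\xi\in\lgl(V\otimes E)$ in the basis $(f_1,\ldots,f_q)$ of $E$ as $\xi=\sum_{m,m'}\xi_{m,m'}\otimes e_{m,m'}$, with $e_{m,m'}(f_k)=\delta_{m',k}f_m$ and $\xi_{m,m'}\in\lgl(V)$, a direct calculation shows that $\xi$ preserves every partial flag of $\pmb{x}$ if and only if $\xi_{m,m'}\in\ltrans(x_{m'},x_m)$ for every pair $(m,m')$; hence
\[
\lstab_{G(q)}(\pmb{x})\ =\ \bigoplus_{1\leq m,m'\leq q}\ltrans(x_{m'},x_m).
\]
Choosing $(x_1,\ldots,x_q)$ in the nonempty open subset of $X^q$ on which each $x_m$ is generic in $X$ and each pair $(x_m,x_{m'})$ with $m\neq m'$ is generic in $X^2$ (so that Notation~\ref{nota:trans} applies to each summand), the right-hand side has dimension exactly $q\,\sgen+q(q-1)\,\tgen$. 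Upper semi-continuity of the infinitesimal stabilizer dimension on $X(q)$ then yields the displayed bound.

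Substituting back, one gets $\modxq\leq q^2\,\modx+q(q-1)(\tgen-\sgen)$. Assertion~(1) then follows from Lemma~\ref{lemm:stab-trans}(1), which gives $\tgen\leq\sgen$, together with the fact that the modality is always nonnegative. Assertion~(2) follows from Lemma~\ref{lemm:stab-trans}(2), which yields $\tgen\leq\sgen-1$, so that $\modxq\leq q^2\modx-q(q-1)=q^2(\modx-1)+q$. I expect the only step needing real care to be the block decomposition of $\lstab_{G(q)}(\pmb{x})$ as a direct sum of transporter spaces; the rest is dimension arithmetic combined with Lemma~\ref{lemm:stab-trans}.
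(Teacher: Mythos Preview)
Your proposal is correct and follows essentially the same route as the paper: both compute the block decomposition $\lstab_{G(q)}(\iota_q(x_1,\ldots,x_q))=\bigoplus_{m,m'}\ltrans(x_{m'},x_m)$, extract the dimension $q\,\sgen+q(q-1)\,\tgen$ at a generic tuple, and then feed this into Lemma~\ref{lemm:stab-trans}. The only cosmetic difference is that you package the dimension count through the identity $\Eqd(X,G)=\dim X-\dim G+\sgen$ together with an explicit check that $\dim X(q)=q^2\dim X$, whereas the paper phrases the same arithmetic directly in terms of orbit dimensions.
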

\begin{proof}
Let $(x_1,\ldots,x_q)\in X^q$ be general in $X^q$, and set
$y=\iota_q((x_1,\ldots,x_q))$. We are interested in $\lstab(y)$. The Lie algebra
$\lgl(V\otimes E)$ identifies with the set of $(q \times q)$-matrices
with entries in $\lgl(V)$. 
Accordingly, $\lstab(y)$ decomposes as
\begin{equation*}
 \label{equa:stab-y}
 \lstab(y) = \bigoplus_{1 \leq i,j \leq q} \ltrans(x_i,x_j) \otimes \Hom(\CC f_i,\CC f_j)
\end{equation*}
which implies
\begin{equation*}
 \label{equa:dim-stab}
 \dim \lstab(y) = \sum_{1 \leq i,j \leq q} \dim \ltrans(x_i,x_j) = q \sgen + (q^2-q) \tgen.
\end{equation*}

Assuming $\modx=0$, we deduce from Lemma \ref{lemm:stab-trans}(1) that
$\dim \lstab(y) \leq q^2 \sgen=q^2(\dim G - \dim X)$. It follows that the orbit $G(q) \cdot y$ has dimension at least
$q^2 \dim G - q^2 (\dim G-\dim X) = q^2 \dim X = \dim X(q)$, so that
$X(q)$ has expected quotient dimension $0$.

Assuming $\modx>0$, set $m=\modx$. We deduce from Lemma \ref{lemm:stab-trans}(2) that
$$
\begin{array}{rcl}
\dim \lstab(y) & \leq & q^2 \sgen - (q^2-q)\\
& = & q^2(\dim G - \dim X + m)-(q^2-q)\\
& = & q^2(\dim G - \dim X) + (m-1)q^2 + q.
\end{array}
$$
It follows that the orbit $G(q) \cdot y$ has dimension at least
$q^2 \dim X - (m-1)q^2 - q$, so that $X(q)$ has expected quotient dimension at most $(m-1)q^2+q$.
\end{proof}

\subsection{The stabilizer of $C$ in $G(q)$}

\subsubsection{The statement}

Recall from Section~\ref{sec:defC} the definition of $C$.

\begin{prop}
\label{pro:N}
Let $N_{\GL(V \otimes E)}(C):=\{g\in \GL(V\otimes E)\,:\, g\cdot C=C\}.$
We have
$$N_{\GL(V \otimes E)}(C) = \GL(V)^q\ltimes \fS_q.$$
\end{prop}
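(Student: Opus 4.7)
The plan is to prove the non-trivial inclusion $N_{\GL(V\otimes E)}(C) \subseteq \GL(V)^q \ltimes \fS_q$ by first identifying the identity component via a Lie algebra computation, and then handling the component group through a normalizer argument. The reverse inclusion $\GL(V)^q \ltimes \fS_q \subseteq N_{\GL(V\otimes E)}(C)$ is immediate from the construction of $\iota_q$: elements of $\GL(V)^q$ act diagonally on $V \otimes E = \bigoplus_l V \otimes \CC f_l$ and preserve the block-sum form of points of $C$, while $\fS_q$, embedded as permutation matrices in the $f_l$ basis, permutes the factors of $C \simeq X^q$.

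Set $N := N_{\GL(V\otimes E)}(C)$ and decompose any $\xi \in \lgl(V \otimes E)$ as a $q \times q$ block matrix $(\xi_{ll'})$ with blocks in $\lgl(V)$, with respect to $V \otimes E = \bigoplus_l V \otimes \CC f_l$. Since $\iota_q$ is a closed embedding, $T_c C = d\iota_q\bigl(T_{(x_1,\dots,x_q)} X^q\bigr)$ for $c = \iota_q(x_1,\dots,x_q)$, and a direct calculation identifies $T_c C \subseteq T_c X(q)$ with the ``block-diagonal'' tangent vectors, i.e.\ those whose $j$-th component, at the flag $G^j_i = \bigoplus_l F^j_{l,i} \otimes \CC f_l$, is of the form $\bigoplus_l \phi^j_{l,i} \otimes \mathrm{id}_{\CC f_l}$. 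The condition $\xi \cdot c \in T_c C$ then translates, for $l \neq l'$, into $\xi_{ll'}(F^j_{l',i}) \subseteq F^j_{l,i}$ for every $j$ and $i \in A^j$, where $F^j_{l,i}$ denotes the $i$-th piece of the $j$-th flag of $x_l$. Since the $x_l$ vary independently in $X$ as $c$ ranges over $C$, and each $F^j_{l,i}$ can be realized as an arbitrary $a^j_i$-dimensional subspace of $V$ with $0 < a^j_i < n$, fixing a generic $x_l$ and varying $x_{l'}$ forces $\im(\xi_{ll'}) \subseteq F^j_{l,i}$, and then varying $x_l$ collapses this to $\im(\xi_{ll'}) = 0$. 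Hence $\mathrm{Lie}(N) = \lgl(V)^q$, and so $N^\circ = \GL(V)^q$.

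For the component group, $N^\circ = \GL(V)^q$ is a characteristic subgroup of $N$, so every $g \in N$ normalizes it and hence normalizes its center $Z(\GL(V)^q) \cong (\CC^*)^q$. This center acts on $V \otimes E$ with weight-space decomposition $\bigoplus_l V \otimes \CC f_l$, one summand per distinct coordinate character, so conjugation by $g$ permutes these characters via some $\sigma \in \fS_q$ and sends $V \otimes \CC f_l$ to $V \otimes \CC f_{\sigma(l)}$. Composing with the permutation matrix $\sigma^{-1}$ yields $g \sigma^{-1} \in \GL(V)^q$, whence $g \in \GL(V)^q \ltimes \fS_q$. The main technical hurdle is the tangent-space calculation in the middle step; once the off-diagonal blocks of $\xi$ are shown to vanish the rest is essentially formal.
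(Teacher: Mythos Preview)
Your proof is correct and takes a genuinely different route from the paper's. The paper argues directly at the group level: writing $g \in N$ as a $q\times q$ block matrix $(g_{ij})$ with entries in $\lgl(V)$, it fixes a single $a\in A^j$ and observes that for arbitrary $a$-dimensional subspaces $U_1,\dots,U_q\subset V$ the sum $\sum_i g_{ij}(U_i)$ must again have dimension exactly $a$. A sequence of elementary linear-algebra lemmas (culminating in Lemma~\ref{lem:dim-sum}) then forces $(g_{ij})$ to be a monomial matrix with invertible blocks, giving $g\in \GL(V)^q\ltimes\fS_q$ in one stroke. You instead split the problem into an infinitesimal computation of $\mathrm{Lie}(N)$ via tangent spaces, and a separate normalizer-of-the-center argument for the component group. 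The paper's method is entirely elementary but requires several preparatory lemmas; yours is shorter and more structural, trading those lemmas for standard Lie-theoretic facts (normality of $N^\circ$, that conjugation permutes the weights of $Z(N^\circ)$ on $V\otimes E$). Both arguments ultimately exploit the same phenomenon---an off-diagonal block is constrained by \emph{all} pairs of independent $a$-dimensional subspaces---but your version uses it at the Lie-algebra level to kill the block outright, whereas the paper uses it at the group level to force each block to be either zero or invertible.
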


The proof of this proposition needs some preparation.

\subsubsection{Sum of subspaces of constant dimension}
\label{sec:sum}

The goal of this independent section is to prove some lemmas that will be useful to prove Proposition \ref{pro:N}.
We fix the following setting:
\begin{notation}
 \label{nota:linear-maps}
 Let $q$ be a positive integer, let $E_1,\ldots,E_q,F$ be vector spaces, let $\alpha_1,\ldots,\alpha_q:E_i \to F$
 be linear maps, and let $d_1,\ldots,d_q$ be integers such that $0 \leq d_i \leq \dim E_i$. Denote by $S$ the sum
 of the subspaces $\im\ \alpha_i$ for those $i$ such that $d_i=\dim E_i$.
\end{notation}
\noindent
We will analyse when it occurs that the dimension of $\sum \alpha_i(U_i)$
does not depend on the vector subspaces $U_i \subset E_i$ of dimension $d_i$.

\begin{lemma}
 \label{lem:image}
 Let $\alpha:E \to F$ be a linear map between finite dimensional vector spaces, and let $d$ be an integer between $0$ and $\dim E$.
 The set of all linear subspaces in $F$ of the form $\alpha(U)$ for $U \subset E$ a subspace of dimension $d$ is the
 set of all linear subspaces of $\im\ \alpha$ of dimension between $\max(0,d-\dim \ker \alpha)$ and
 $\min(d,\rk\ \alpha)$.
\end{lemma}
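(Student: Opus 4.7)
The lemma is a standard two-sided bound, so the proof splits naturally into showing that every $\alpha(U)$ has dimension in the prescribed range, and that every subspace $W \subset \im\ \alpha$ of dimension in that range is realised as some $\alpha(U)$.

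For the first inclusion, I would take any subspace $U \subset E$ with $\dim U = d$ and use the rank--nullity relation $\dim \alpha(U) = d - \dim(U \cap \ker \alpha)$. The upper bound $\dim \alpha(U) \leq \min(d,\rk\ \alpha)$ is immediate since $\alpha(U) \subset \im\ \alpha$ and its dimension cannot exceed that of $U$. For the lower bound I would observe that $\dim(U \cap \ker \alpha) \leq \min(d, \dim \ker \alpha)$, which gives $\dim \alpha(U) \geq \max(0, d - \dim \ker \alpha)$.

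For the converse, I would start from a subspace $W \subset \im\ \alpha$ with $\max(0, d-\dim \ker \alpha) \leq \dim W \leq \min(d, \rk\ \alpha)$, and construct an explicit $U$. Pick a section of $\alpha$ over $W$: that is, choose $U_0 \subset E$ with $\alpha_{|U_0}:U_0 \to W$ an isomorphism, so $\dim U_0 = \dim W$. To adjust $U_0$ up to dimension $d$ without changing its image, I would add a subspace $K \subset \ker \alpha$ of dimension $d - \dim W$. This is possible precisely because $d - \dim W \leq \dim \ker \alpha$, which rearranges to the hypothesis $\dim W \geq d - \dim \ker \alpha$. Setting $U = U_0 \oplus K$ gives $\dim U = d$ and $\alpha(U) = \alpha(U_0) = W$, as required.

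There is essentially no obstacle: the entire argument rests on rank--nullity and the elementary fact that one can choose a linear section of a surjective map, plus the freedom to enlarge by any subspace of the kernel. The only point that needs a moment of care is checking that the direct sum $U_0 \oplus K$ is indeed direct, which holds because $U_0 \cap \ker \alpha = \{0\}$ by construction of $U_0$ as a lift of $W$.
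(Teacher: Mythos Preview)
Your argument is correct: the rank--nullity bound gives the forward inclusion, and the explicit construction $U=U_0\oplus K$ with $U_0$ a lift of $W$ and $K\subset\ker\alpha$ of complementary dimension gives the converse, with the directness of the sum and the inequality $d-\dim W\leq\dim\ker\alpha$ checked exactly as you say. The paper itself omits the proof of this lemma as elementary, so there is nothing to compare against; your write-up is precisely the standard justification one would supply.
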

\noindent
The proof of Lemma~\ref{lem:image} will be omitted.

\begin{lemma}
 \label{lem:dim-interval}
 Let $q,\alpha_i:E_i \to F$ and $d_i$ be as in Notation \ref{nota:linear-maps}. Let $V \subset F$
 be a linear subspace. Then the set of the dimensions of the subspaces $(\sum \alpha_i(U_i)) \cap V$,
 where $U_i$ is any subspace in $E_i$ of dimension $d_i$, is an integer interval.
\end{lemma}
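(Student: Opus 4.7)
The plan is to reduce to varying a single $U_j$ at a time, then to a linear-algebra question amenable to Lemma \ref{lem:image}.

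\emph{Reduction to a single coordinate.} Given two tuples $(U_i), (U_i') \in \prod_i \Gr(d_i;E_i)$ with $f$-values $a \leq b$, form the intermediate tuples $U^{(0)}, \ldots, U^{(q)}$ in which $U^{(j)}$ agrees with $(U_i')$ in the first $j$ coordinates and with $(U_i)$ in the rest. Consecutive tuples differ in only one coordinate. If the interval property is known in the one-coordinate case, then each segment $U^{(j)} \to U^{(j+1)}$ achieves every integer in $[\min(f(U^{(j)}),f(U^{(j+1)})),\max(f(U^{(j)}),f(U^{(j+1)}))]$. The union of these intervals is $[\min_j f(U^{(j)}), \max_j f(U^{(j)})]$, which contains $a$ and $b$, hence $[a,b]$.

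\emph{One-coordinate reduction.} Fix $U_i$ for $i \neq j$, set $W = \sum_{i \neq j} \alpha_i(U_i)$, and apply the modular identity
$$
\dim((W+W') \cap V) = \dim(V \cap W) + \dim((V+W) \cap W') - \dim(W \cap W')
$$
with $W' = \alpha_j(U_j)$. This shows that $f$ equals a constant plus $\dim(V_1 \cap \alpha_j(U_j)) - \dim(V_2 \cap \alpha_j(U_j))$, where $V_2 := W \subset V+W =: V_1$ are fixed. It remains to show: for any fixed linear map $\alpha : E \to F$, any flag $V_2 \subset V_1 \subset F$, and any $d \leq \dim E$, the set
$$
\{\dim(V_1 \cap \alpha(U)) - \dim(V_2 \cap \alpha(U)) : U \in \Gr(d;E)\}
$$
is an integer interval.

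\emph{The reduced problem.} By Lemma \ref{lem:image}, $\alpha(U)$ ranges over all subspaces $W'$ of $\im \alpha$ whose dimension $k$ lies in an explicit integer interval $[k_{\min}, k_{\max}]$. Replacing $V_i$ by $V_i \cap \im \alpha$, we may assume $V_2 \subset V_1 \subset \im \alpha$. For each fixed $k$, one constructs $W'$ stepwise by first choosing $W' \cap V_2$, then extending inside $V_1$ to $W' \cap V_1$, then extending inside $\im \alpha$ to $W'$. This shows the set of attainable pairs $(a,b) := (\dim(W' \cap V_2), \dim(W' \cap V_1))$ is exactly the set of integer points of a convex lattice polytope $P_k$ cut out by explicit linear inequalities in $a$ and $b$. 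A direct check shows the image of $P_k \cap \ZZ^2$ under $(a,b) \mapsto b-a$ is an integer interval, and as $k$ varies in $[k_{\min}, k_{\max}]$ the endpoints of these intervals move monotonically, so their union is again an integer interval.

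\emph{Main obstacle.} The delicate point is the combinatorial verification that the polytope $P_k$ projects onto an integer interval under $b-a$, and that the intervals for successive $k$ overlap to produce a single interval. This amounts to a finite case analysis handling the $\max$/$\min$ boundary cases in the inequalities, but requires no new ideas beyond linear programming.
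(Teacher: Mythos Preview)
Your proof is correct, and the global structure matches the paper's: both reduce to varying a single $U_j$ via the same chain argument, and both invoke Lemma~\ref{lem:image}.

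The one-coordinate step, however, is handled differently. You apply Lemma~\ref{lem:image} to $\alpha_j$ itself, so that $\alpha_j(U_j)$ ranges over all subspaces of $\im\alpha_j$ of dimension in an interval; you must then control the quantity $\dim\bigl((W+\alpha_j(U_j))\cap V\bigr)$, which via your (correct) modular identity becomes a constant plus a \emph{difference} $\dim(V_1\cap W')-\dim(V_2\cap W')$ with $V_2\subset V_1$. This forces the polytope analysis you sketch. The paper instead applies Lemma~\ref{lem:image} to the composite map $E_j\to F\to F/W$, so that the \emph{sum} $W+\alpha_j(U_j)$ itself is seen directly to range over all subspaces $W'$ with $W\subset W'\subset W+\im\alpha_j$ and $\dim W'$ in an interval. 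One is then left with the single quantity $\dim(W'\cap V)$, and (passing to the quotient by $W$) this is just the elementary fact that for a fixed subspace $\bar V$ of a space $F'$, the set $\{\dim(U'\cap\bar V): U'\subset F',\ \dim U'\in[a',b']\}$ is an interval.

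So your route is genuinely different in that step and buys generality you don't need: your reduced statement (interval property for $\dim(V_1\cap W')-\dim(V_2\cap W')$ over a Schubert-type family of $W'$) is strictly harder than the one the paper faces. The paper's trick of pushing $W$ into the quotient before invoking Lemma~\ref{lem:image} avoids the two-flag polytope entirely and makes the ``main obstacle'' you flag disappear.
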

\begin{proof}
 Let $j \in \{1,\ldots,q\}$ be a fixed integer, and let a $(q-1)$-uple $(U_i)_{i \neq j}$ of subspaces as in the lemma
 be fixed. By Lemma \ref{lem:image}, when $U_j$ varies among the subspaces of $E_i$ of dimension $d_i$,
 the set of all subspaces of the form $\sum_{i=1}^q \alpha_i(U_i)$ is the set of all
 subspaces containing $\sum_{i \neq j} \alpha_i(U_i)$, included in $\sum_{i \neq j} \alpha_i(U_i) + \im(\alpha_j)$,
 and of dimension belonging to a given integer interval.
 
 It follows that the dimensions of the subspaces $(\sum \alpha_i(U_i)) \cap V$ when $U_j$ varies
 are an integer interval. Letting $j$ vary in $\{1,\ldots,q\}$, we deduce the lemma.
\end{proof}

\begin{lemma}
 \label{lem:dim-intersection}
 Let $E$ be a vector space, let $d,d'$ be integers between $0$ and $\dim E$, and
 let $V \subset E$ be a fixed subspace of dimension $d$. Assume that the dimension of $V \cap W$, for $W \subset E$
 a subspace of dimension $d'$, does not depend on $W$. Then at least one of the following occurs:
 \begin{enumerate}[label=(\greek*)]
  \item $d=0$;
  \item $d=\dim E$;
  \item $d'=0$;
  \item $d'=\dim E$.
 \end{enumerate}
\end{lemma}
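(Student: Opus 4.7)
The plan is to prove the contrapositive: assuming that all four conclusions fail, i.e.\ $0<d<n$ and $0<d'<n$ where $n=\dim E$, I will exhibit two subspaces $W_1,W_2\subset E$ of dimension $d'$ with $\dim(V\cap W_1)\neq\dim(V\cap W_2)$. Concretely, I expect to construct one ``nested'' choice achieving the maximal value $\min(d,d')$ and one ``transverse'' choice achieving the minimal value $\max(0,d+d'-n)$, and then show these two values differ under the standing assumptions.

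For the first subspace I would split on whether $d\le d'$ or $d\ge d'$. In the first case I pick any $W_1$ with $V\subset W_1\subset E$ and $\dim W_1=d'$ (possible since $d\le d'\le n$); then $V\cap W_1=V$ has dimension $d=\min(d,d')$. In the second case I pick $W_1\subset V$ of dimension $d'$ (possible since $d'\le d$), giving $\dim(V\cap W_1)=d'=\min(d,d')$. For the second subspace I fix a complement $U$ of $V$ in $E$, of dimension $n-d>0$. If $d'\le n-d$ I take $W_2\subset U$ of dimension $d'$, so $V\cap W_2=0$; if $d'>n-d$ I take $W_2\supset U$ of dimension $d'$, so $V\cap W_2$ has dimension $d'-(n-d)=d+d'-n$. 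In both cases $\dim(V\cap W_2)=\max(0,d+d'-n)$.

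It then remains to check the strict inequality $\min(d,d')>\max(0,d+d'-n)$. Since $d,d'>0$ we have $\min(d,d')>0$. Assuming without loss of generality $d\le d'$, we have $\min(d,d')=d$, and the inequality $d>d+d'-n$ is equivalent to $d'<n$, which holds by assumption. Thus the two constructed subspaces realize distinct values of $\dim(V\cap W)$, contradicting the hypothesis that this dimension is independent of $W$.

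I do not expect a serious obstacle here: once one recognizes that the locus $\{W:\dim W=d'\}$ in $\mathrm{Gr}(d';E)$ realizes every dimension of intersection with $V$ in the integer interval $[\max(0,d+d'-n),\min(d,d')]$, the argument reduces to elementary dimension bookkeeping. The only mildly delicate point is to make sure the two explicit constructions are both legal simultaneously, which is why I split on $d\lessgtr d'$ and on $d'\lessgtr n-d$; in each of the four subcases the required inclusions have non-negative codimensions, so $W_1$ and $W_2$ genuinely exist.
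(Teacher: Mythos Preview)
Your proof is correct and follows essentially the same approach as the paper's. The paper's argument is simply a terse version of yours: it asserts without further justification that the minimal and maximal values of $\dim(V\cap W)$ are $\max(d+d'-\dim E,0)$ and $\min(d,d')$, and then observes that equality of these two integers forces one of the four degenerate cases; you have supplied the explicit constructions of $W_1$ and $W_2$ realizing these extremes and spelled out the final inequality check.
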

\begin{proof}
 The minimal dimension of $V \cap W$ is $\max(d+d'-\dim E,0)$ and its maximal dimension is
 $\min(d,d')$. The equality of these integers implies that one of the four cases holds.
\end{proof}

\begin{lemma}
 \label{lem:dim-sum}
 Let $\alpha_i:E_i \to F$ be as in Notation \ref{nota:linear-maps}. The dimension of
 $\sum \alpha_i(U_i)$ does not depend on the vector subspaces $U_i \subset E_i$ of dimension $d_i$
 if and only if for all $i$, one of the following holds:
 \begin{enumerate}[label=(\roman*)]
  \item $d_i=0$,
  \item $0<d_i$ and $\im\ \alpha_i \subset S$,
  \item $0 < d_i < \dim E_i$, $\alpha_i$ is injective, and $\im\ \alpha_i \not \subset S$,
 \end{enumerate}
 and $S$ and the subspaces $\im\ \alpha_i$ for $i$ in case $(iii)$ are in direct sum.
\end{lemma}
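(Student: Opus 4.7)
The ``if'' direction is a direct computation: $\alpha_i(U_i)$ equals $\{0\}$ for $i$ in case (i), is contained in $S$ for $i$ in case (ii), and is a $d_i$-dimensional subspace of $\im\,\alpha_i$ for $i$ in case (iii) by injectivity. The assumed direct sum decomposition then yields $\dim \sum_i \alpha_i(U_i) = \dim S + \sum_{i \in \mathrm{(iii)}} d_i$, independent of the $U_i$'s.

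For the ``only if'' direction I would fix an index $j$ and vary $U_j$ alone. Setting $W := \sum_{i \neq j} \alpha_i(U_i)$ for fixed $(U_i)_{i \neq j}$, the constancy of $\dim(W + \alpha_j(U_j)) = \dim W + \dim \bar\alpha_j(U_j)$, where $\bar\alpha_j \colon E_j \to F/W$ is the induced map, forces $\dim \bar\alpha_j(U_j)$ to be constant in $U_j$. Applying Lemma~\ref{lem:dim-intersection} to $U_j$ and $\ker \bar\alpha_j$ inside $E_j$ yields four alternatives: $d_j = 0$, $d_j = \dim E_j$, $\im\,\alpha_j \subset W$ (i.e.\ $\bar\alpha_j = 0$), or $\alpha_j$ injective with $\im\,\alpha_j \cap W = 0$ (i.e.\ $\bar\alpha_j$ injective). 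The first two alternatives place $j$ in case (i) or case (ii) directly.

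To finish in the range $0 < d_j < \dim E_j$, I would use Lemma~\ref{lem:image} to realize both a minimal $W$ close to $S$ and a maximal $W$ containing $S$ together with a $d_i$-dimensional subspace of $\im\,\alpha_i$ for each $i \neq j$ in case (iii). The branch $\im\,\alpha_j \subset W$ applied to the minimal $W$ forces $\im\,\alpha_j \subset S$ and gives case (ii). The injective branch applied to all maximal $W$'s, combined with the fact that any nonzero $y_i \in \im\,\alpha_i$ extends to a $d_i$-dimensional subspace $V_i \subset \im\,\alpha_i$ (Lemma~\ref{lem:image}), upgrades the pointwise condition $\im\,\alpha_j \cap W = 0$ into the required direct sum of $S$ with the $\im\,\alpha_i$, $i \in \mathrm{(iii)}$. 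The principal obstacle I foresee is the apparent circularity, since the extremal $W$'s presuppose the classification of the other indices; I would resolve this by induction on $q$ with $q = 1$ as a straightforward base case, applying the inductive hypothesis to the first $q-1$ maps composed with the projection to $F/\alpha_j(U_j)$ after fixing a suitable $U_j$.
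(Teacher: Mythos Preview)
Your overall structure---fix an index $j$, apply Lemma~\ref{lem:dim-intersection} to $U_j$ versus $\ker\bar\alpha_j$, and induct on $q$---is the same as the paper's. The genuine gap is in the range $0<d_j<\dim E_j$. There, which of the two remaining alternatives holds ($\im\,\alpha_j\subset W$ or $\bar\alpha_j$ injective) depends on the choice of $(U_i)_{i\neq j}$, and you never exclude the mixed situation where the first alternative holds for some $W$ and the second for another. Your dichotomy ``$\subset$ branch for the minimal $W$'' versus ``injective branch for all maximal $W$'s'' is not exhaustive: the injective branch for a small $W$ does not propagate upward. The paper closes exactly this gap with Lemma~\ref{lem:dim-interval}: the set of values $\dim(\im\,\alpha_j\cap W)$, as $(U_i)_{i\neq j}$ varies, is an integer interval; since under your two alternatives it only ever equals $0$ or $\rk\,\alpha_j$, one gets $\rk\,\alpha_j\leq 1$, contradicting the injectivity of $\alpha_j$ together with $\dim E_j\geq 2$.

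Two smaller remarks. First, there is in general no single ``minimal $W$'' equal to $S$: once some $i\neq j$ falls in case~(iii), every $W$ strictly contains $S$. What you actually need (and what the paper proves in its ``only $(\delta)$'' case) is that $\im\,\alpha_j\subset W$ for \emph{all} $W$, whence $\im\,\alpha_j\subset\bigcap W=S$ by the inductive hypothesis. Second, your induction via projection to $F/\alpha_j(U_j)$ is awkward, because the classification in that quotient depends on the chosen $U_j$. The paper instead observes that once one knows a single pure branch occurs for all $(U_i)_{i\neq j}$, the quantity $\dim\sum_{i\neq j}\alpha_i(U_i)$ is itself constant, so the inductive hypothesis applies directly to $\alpha_i$, $i\neq j$, in $F$ (or, in the case $d_j=\dim E_j$, in $F/\im\,\alpha_j$).
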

\begin{proof}
It is plain that the given conditions imply that the dimension of $\sum \alpha_i(U_i)$ does not depend on
the $q$-uple $(U_i)$.
Conversely, assume that this dimension is constant. Let $\overline{\alpha}_1$ be the composition
$E_1 \stackrel{\alpha_1}{\longto} F \longto F/\sum_{i \geq 2} \alpha_i(U_i)$. The fact that $\dim \sum \alpha_i(U_i)$
deos not depend on $U_1$ implies that the dimension of $U_1 \cap \ker \oalpha_1$ does not depend on $U_1$. We are thus
in one of the four cases of Lemma \ref{lem:dim-intersection}. Case $(\alpha)$ is case $(i)$ of our Lemma. Assume
we are in case $(\beta)$. This implies $(ii)$. Moreover, letting $\oalpha_i$ be the composition
$E_i \longto F \longto F/\im\ \alpha_1$ when $i \geq 2$, we may assume by induction that
the lemma is true for the linear maps $\oalpha_2,\ldots,\oalpha_k$. Since the last condition of the lemma for
$\alpha_1,\ldots,\alpha_q$ is equivalent to the same condition for $\oalpha_2,\ldots,\oalpha_q$, the lemma is proved
in this case.

Note that condition $(\alpha)$ or $(\beta)$ holds for one $q$-uple $(U_i)$ if and only it holds for all $(U_i)$.
Assume now that these conditions never hold. Then, for any $(U_i)$ we either have condition $(\gamma)$, which is equivalent
to $\alpha_1$ being injective and $\im\ \alpha_1 \cap \sum_{i \geq 2} \alpha_i(U_i) = \{ 0 \}$, or condition
$(\delta)$, which is equivalent to $\im\ \alpha_1 \subset \sum_{i \geq 2} \alpha_i(U_i)$.

If both cases $(\gamma)$ and $(\delta)$ occur,
we apply Lemma \ref{lem:dim-interval} to $V=\im\ \alpha_1$ and the linear maps
$\alpha_2,\ldots,\alpha_k$, and we deduce that the rank of $\alpha_1$ is at most $1$. Since $\alpha_1$ is injective
because case $(\gamma)$ occurs, we deduce that $\dim E_1=0$ or $\dim E_1=1$, so case $(\alpha)$ or $(\beta)$ occurs.

If only case $(\gamma)$ occurs, we deduce that $\im\ \alpha_1$ is in direct sum with $\sum_{i \geq 2} \im\ \alpha_i$.
If only case $(\delta)$ occurs, we deduce that $\im\ \alpha_1 \subset S$. In each case, the conclusion of the lemma
holds.
\end{proof}

\subsubsection{Proof of Proposition~\ref{pro:N}}

Let $g \in N_{\GL(V \otimes E)}(C)$.
As in the proof of Proposition \ref{prop:modality}, we consider $g$ as a $q \times q$
matrix $g=(g_{i,j})_{,1\leq i,j \leq q}$ with coefficients $g_{i,j}$ in $\lgl(V)$. We choose
a factor $\Fl(A;V)$ of $X$ and we let $a \in A$.

The fact that $g$ preserves $C$ implies that given $U_1,\ldots,U_q \subset V$ of dimension $a$, there exist
$V_1,\ldots,V_q \subset V$ of dimension $a$ such that
$g \cdot (U_1 \otimes \CC f_1 \oplus \cdots \oplus U_q \otimes \CC f_q)
= V_1 \otimes \CC f_1 \oplus \cdots \oplus V_q \otimes \CC f_q$.

This implies that for $j$ in $\{1,\ldots,q\}$, we have $\sum_i g_{i,j}(U_i)=V_j$. Let $j$ be fixed:
the dimension of $\sum_i g_{i,j}(U_i)$ is always $a$, and we may apply Lemma \ref{lem:dim-sum} to the linear
maps $g_{i,j}:V \to V$. Since we have
$d_i=a$ for all $i$, we have $S=\{0\}$, case $(i)$ does not occur, and case $(ii)$ implies $g_{i,j}=0$. If some
$g_{i,j}$ is not equal to $0$, it is in case $(iii)$ and therefore it is an isomorphism $V \to V$. The condition
that the images of the linear maps $g_{i,j}$ are in direct sum implies that there can be at most one $i$ in case $(iii)$.
On the other hand, there is at least one such $i$ since $g$ is invertible.
It follows that $g=(g_{i,j})$ is a monomial matrix with coefficients
in $\GL(V)$, proving the proposition.
\qed

\section{Proof of the main theorem}

In this section, we prove Theorem~\ref{main-theo}.
We come back to the situation of Section~\ref{sec:setting}.
In particular, we have $k=3$ and $c_{\lambda,\mu,\nu}=2$ and $X$ is defined by
\eqref{eq:defXlambda}.
We will prove that 
$\dim \Ho(X(q),\Li(q)^{\otimes p})^{G(q)}=\binom{p+q}{q}$.

\subsection{Proof in the case of expected quotient dimension $1$}
\label{sec:mod1}

In this section, we make the extra assumption that $\modx=1$. 

\step{Details on $G$ acting on $X$.}

By \cite[Theorem~3.2]{teleman} and our assumption $c_{\lambda,\mu,\nu}=2$, we have \vspace{-3mm}

\begin{equation}
\CC^2\simeq\Ho(X,\Li)^{G} \simeq \Ho(X^\rss(\Li) ,
\Li_{|X^\rss(\Li)})^G\,.\label{eq:TelX}
\end{equation}

\noindent
By \cite[Proof of Corollary~2.4]{sherman2015geometric},
$X^\rss(\Li)\quot G$ is isomorphic to $\PP^1$.
Let $\pi_X\,:\,X^\rss(\Li) \longto \PP^1$ be the quotient map.
Observe that the stabilizer in the linear group of any point in a
product of flag variety is connected, as an open subset of some vector
space. By Kempf's criterion \cite[Théorème 2.3]{drezet}, this implies that there exists a line
bundle $\cO_{\PP^1}(d)$ on  $\PP^1$ such that $\pi_X^*(\cO_{\PP^1}(d))$ is the
restriction of $\Li$ to $X^\rss(\Li)$. Hence \vspace{-3mm}
 
\begin{equation}
\Ho(X^\rss(\Li) , \Li_{|X^\rss(\Li)})^G\simeq
\Ho(\PP^1,\cO_{\PP^1}(d)).\label{eq:KeX}
\end{equation}

Combining \eqref{eq:TelX} and \eqref{eq:KeX}, we get $d=1$. Now, the
same arguments imply that
$$
\begin{array}{ccl}
  \Ho(X,\Li^{\otimes p})^G&\simeq&\Ho(X^\rss(\Li),\Li^{\otimes p})^G\\
&\simeq&\Ho(\PP^1,\cO(p))\\
&\simeq&S^p\CC^2.
\end{array}
$$
Since the linear map $S^p \Ho(\PP^1,\cO(1))\longto \Ho(\PP^1,\cO(p))$ is an isomorphism, we get:

\begin{lemma}
\label{lemm:Sp}
The linear map $S^p \Ho(X,\Li)^G\longto \Ho(X,\Li^{\otimes p})^G$
is an isomorphism.
\end{lemma}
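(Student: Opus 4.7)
The essential work has already been done in the paragraphs immediately preceding the lemma, so the plan is to assemble what is there into a commutative diagram and observe that the horizontal maps are compatible with multiplication.

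First, I would set up the naturality square
\begin{equation*}
\begin{array}{ccc}
S^p \Ho(X,\Li)^G & \longto & \Ho(X,\Li^{\otimes p})^G \\
\downarrow & & \downarrow \\
S^p \Ho(\PP^1,\cO_{\PP^1}(1)) & \longto & \Ho(\PP^1,\cO_{\PP^1}(p)),
\end{array}
\end{equation*}
where the horizontal maps are the natural multiplication maps and the vertical maps are the composites of restriction $\Ho(X,\Li^{\otimes m})^G \to \Ho(X^\rss(\Li),\Li^{\otimes m})^G$ with pullback along $\pi_X$ (for $m=1$, and for $m=p$). Commutativity is immediate because both restriction and pullback commute with tensor product of sections, and thus with the symmetric-power multiplication.

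Next I would verify that all three maps except possibly the top one are isomorphisms. The right-hand vertical arrow is an isomorphism by the combination of Teleman's theorem (exactly as in \eqref{eq:TelX}, applied to $\Li^{\otimes p}$ in place of $\Li$) together with Kempf's descent (exactly as in \eqref{eq:KeX}, applied again with $d=1$ so that $\pi_X^*\cO_{\PP^1}(p)=\Li^{\otimes p}_{|X^\rss(\Li)}$). The left-hand vertical arrow is the $p$-th symmetric power of the $m=1$ case of the same identification, hence is an isomorphism. Finally, the bottom horizontal arrow is the classical isomorphism $S^p \Ho(\PP^1,\cO(1)) \stackrel{\sim}{\longto} \Ho(\PP^1,\cO(p))$.

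Two out of three vertical and horizontal maps being isomorphisms, commutativity of the square forces the top horizontal map to be an isomorphism as well, which is exactly the statement of the lemma. There is no real obstacle here; the only thing to be a little careful about is that the pullback identification $\Ho(X^\rss(\Li),\Li^{\otimes p}_{|X^\rss(\Li)})^G \simeq \Ho(\PP^1,\cO_{\PP^1}(p))$ is compatible with the multiplicative structure on sections, which is automatic since $\pi_X^*$ is a ring homomorphism on the relevant graded algebras of $G$-invariant sections.
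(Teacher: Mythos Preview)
Your argument is correct and is precisely the paper's own reasoning made explicit: the paper establishes the identifications \eqref{eq:TelX} and \eqref{eq:KeX} (and their analogues for $\Li^{\otimes p}$), notes that $S^p\Ho(\PP^1,\cO(1))\to\Ho(\PP^1,\cO(p))$ is an isomorphism, and immediately deduces the lemma, which amounts exactly to the commutative square you wrote down. The only minor wording issue is that the vertical arrows go through the \emph{inverse} of $\pi_X^*$ (pullback naturally goes from $\PP^1$ to $X^\rss$), but since these are isomorphisms this is harmless.
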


\step{Details on $N_{G(q)}(C)$ acting on $C$.}

It is well-known that the symmetric functions on $q$ variables $x_1,\ldots,x_q$ form a polynomial
algebra generated by the elementary symmetric functions $e_k$, where $e_k$ is the coefficient of $u^k$ in the
polynomial $\prod_{i=1}^q (x_iu+1)$. Writing $\PP^1$ as the union of two affine lines, one deduces that
$\bigoplus_p \Ho({(\PP^1)}^q,{\boxtimes}^{{}^{\scriptstyle{q}}} \cO_{\PP^1}(p))^{\fS_q}$
is a polynomial algebra generated by $(c_{k})_{0 \leq k \leq q}$,
where $c_{k}$ is the coefficient in $u^kv^{q-k}$
of the product $\prod_{i=1}^q (x_iu+y_iv)$. Here $(x_i,y_i)$ are sections of $\cO_{\PP^1}(1)$
on the $i$-th factor $\PP^1$:
\begin{lemma}
\label{lem:algC}
The algebra $\displaystyle \bigoplus_p \Ho({(\PP^1)}^q,{\boxtimes}^{{}^{\scriptstyle{q}}} \cO_{\PP^1}(p))^{\fS_q}$ is
freely generated by
$$\Ho({(\PP^1)}^q,{\boxtimes}^{{}^{\scriptstyle{q}}} \cO_{\PP^1}(1))^{\fS_q}=\Ho(\PP^q,\cO_{\PP^q}(1)).$$
\end{lemma}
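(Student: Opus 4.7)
The plan is to identify the graded algebra geometrically with the homogeneous coordinate ring of $\PP^q$ via the classical isomorphism $(\PP^1)^q / \fS_q \simeq \PP^q$, rather than by manipulating multi-symmetric polynomials directly.

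First, I would consider the morphism
$$
\psi : (\PP^1)^q \longto \PP^q,\qquad
\big((x_i:y_i)\big)_{1\leq i\leq q} \longmapsto (c_0:c_1:\cdots:c_q),
$$
where $c_k$ is the coefficient of $u^k v^{q-k}$ in $\prod_{i=1}^q (x_i u + y_i v)$. By construction $\psi$ is $\fS_q$-invariant. Since every nonzero binary form of degree $q$ factors into linear forms uniquely up to permutation and scalar, $\psi$ is finite, surjective, and its set-theoretic fibres are exactly the $\fS_q$-orbits. As $(\PP^1)^q$ and $\PP^q$ are both smooth (hence normal), Zariski's main theorem shows that $\psi$ factors as the quotient map $(\PP^1)^q \to (\PP^1)^q/\fS_q$ followed by an isomorphism onto $\PP^q$.

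Next, I would identify the pullbacks of line bundles. Each $c_k$ is multilinear in the pairs $(x_i, y_i)$, hence a section of $\boxtimes^q \cO_{\PP^1}(1)$; since $c_0, \ldots, c_q$ globally generate $\psi^* \cO_{\PP^q}(1)$, we get the equivariant identification $\psi^* \cO_{\PP^q}(1) \simeq \boxtimes^q \cO_{\PP^1}(1)$ with the natural permutation $\fS_q$-linearization on the right. Using that $\psi$ is finite and that $(\psi_* \cO_{(\PP^1)^q})^{\fS_q} = \cO_{\PP^q}$, the projection formula yields
$$
\Ho\big((\PP^1)^q,\boxtimes^q \cO_{\PP^1}(p)\big)^{\fS_q} \simeq \Ho\big(\PP^q, \cO_{\PP^q}(p)\big)
$$
compatibly with the graded algebra structure. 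The right-hand side is the homogeneous coordinate ring of $\PP^q$, which is freely generated as a graded algebra by its degree-one part $\Ho(\PP^q,\cO_{\PP^q}(1))$, yielding the claim.

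The main obstacle I anticipate is verifying carefully that $\psi$ is the $\fS_q$-quotient at the scheme-theoretic level, not just set-theoretically. A clean workaround avoiding this geometric subtlety is to check algebraic independence of the $c_k$'s directly (e.g.\ by non-degeneracy of the Jacobian of $\psi$ at a point with pairwise distinct coordinates) and then match Hilbert series using the classical identity $\dim\big(\otimes^q S^p \CC^2\big)^{\fS_q} = \dim S^q(S^p \CC^2) = \binom{p+q}{q}$ to conclude that the polynomial subalgebra generated by $c_0, \ldots, c_q$ exhausts the full $\fS_q$-invariant graded algebra.
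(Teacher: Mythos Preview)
Your argument is correct and arrives at the same generators $c_0,\dots,c_q$ that the paper uses, but your justification differs in style. The paper's proof is purely algebraic: it invokes the classical fact that the ring of symmetric polynomials in $x_1,\dots,x_q$ is freely generated by the elementary symmetric functions, and then globalizes from the affine chart to $(\PP^1)^q$ by covering $\PP^1$ with two affine lines (i.e.\ by homogenizing). Your main route is instead geometric: you build the quotient morphism $\psi:(\PP^1)^q\to\PP^q$ explicitly, argue via normality and Zariski's main theorem that it realizes the categorical $\fS_q$-quotient, identify $\psi^*\cO_{\PP^q}(1)$ with $\boxtimes^q\cO_{\PP^1}(1)$, and then apply the projection formula together with $(\psi_*\cO)^{\fS_q}=\cO_{\PP^q}$. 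Your alternative workaround (Jacobian for algebraic independence plus the Hilbert-series identity $\dim S^q(S^p\CC^2)=\binom{p+q}{q}$) is also valid and is closer in spirit to a direct dimension count. Each approach has its merits: the paper's is shorter and leans on a standard fact; yours packages the statement as a scheme-theoretic quotient, which dovetails nicely with the later use of $(\PP^1)^q\quot\fS_q=\PP^q$ and $\pi_\fS^*\cO_{\PP^q}(1)=\cO_{\PP^1}(1)^{\boxtimes q}$ in the paper.
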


\medskip

Recall that $C\simeq X^q$ and $N_{G(q)}(C)\simeq G^q\ltimes \fS_q$.
Hence, the previous step implies that 
\begin{equation}
\label{equa:CmodN}
C^\rss(\Li(q)_{|C})\quot N_{G(q)}(C)=(X^\rss(\Li)\quot G)^q\quot
\fS_q=(\PP^1)^q\quot \fS_q=\PP^q.
\end{equation}
Let
$\pi_\fS\,:\, (\PP^1)^q\longto\PP^q$ and
$\pi_N\,:\, C^\rss(\Li(q)_{|C})\longto\PP^q$ be the quotients map by
$\fS_q$ and $N_{G(q)}(C)$, respectively. The isomorphism of Lemma \ref{lem:algC} yields also
$\pi_\fS^* \cO_{\PP^q}(1)=\cO_{\PP^1}(1)^{\boxtimes q}$.
Now, Step 1 implies that $\pi_N^*(\cO_{\PP^q}(1))=\Li(q)_{|C}$.
Then, we have
\begin{equation}
\label{equa:sectionsC}
\begin{array}{ccl}
  \Ho(C,\Li(q)^{\otimes p})^{N_{G(q)}(C)}&\simeq&\Ho(C^\rss(\Li(q)_{|C}),\Li(q)^{\otimes p})^{N_{G(q)}(C)}\\
&\simeq&\Ho((\PP^1)^q,\cO_{\PP^1}(p)^{\boxtimes q})^{\fS_q}\\
&\simeq&S^p \Ho((\PP^1)^q,\cO_{\PP^1}(1)^{\boxtimes q})^{\fS_q}\\
&\simeq&S^p \Ho(C,\Li(q))^{N_{G(q)}(C)},
\end{array}
\end{equation}
where the third isomorphism comes from Lemma \ref{lem:algC}.

\step{Details on $G(q)$ acting on $X(q)$.}

Let $\pi_q\,:\,X(q)^\rss(\Li(q))\longto X(q)^\rss(\Li(q))\quot G(q)$
be the quotient map.

\begin{lemma}
\label{lem:pisurj}
The map $\pi:C^{\rm ss}(\Li(q)) \quot N_{G(q)}(C) \longto X(q)^{\rm ss}(\Li(q)) \quot G(q)$
in \eqref{def-pi} is surjective.  
\end{lemma}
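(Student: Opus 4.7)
The plan is to combine a dimension count on both sides with the fact that $\pi$ is finite on its image. By \eqref{equa:CmodN}, the source $C^\rss(\Li(q)_{|C}) \quot N_{G(q)}(C)$ is isomorphic to $\PP^q$, and hence has dimension $q$. On the target side, our standing assumption $\modx=1$ lets us invoke Proposition~\ref{prop:modality}(2) to obtain $\modxq \leq q^2(1-1) + q = q$, and Proposition~\ref{prop:modmax} then gives $\dim\!\left(X(q)^\rss(\Li(q)) \quot G(q)\right) \leq q$.

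The map $\pi$ of the statement factors the map $p$ of \eqref{def-pi} applied with the one-parameter subgroup $\tau$, whose centralizer in $G(q)$ is $G(q)^\tau = G^q$, through the further quotient by the finite group $N_{G(q)}(C)/G^q \simeq \fS_q$. More precisely, the natural surjection
$$C^\rss(\Li(q)_{|C}) \quot G^q \longto C^\rss(\Li(q)_{|C}) \quot N_{G(q)}(C)$$
is finite (because $\fS_q$ is finite), and composing it with $\pi$ recovers $p$. Since Lemma~\ref{lem:pifinite} asserts that $p$ is finite on its image, the same is true of $\pi$; in particular, $\im \pi$ is a closed subvariety of $X(q)^\rss(\Li(q)) \quot G(q)$ of dimension equal to that of its source, namely $q$.

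Finally, $X(q)$ is irreducible as a product of flag varieties, and $X(q)^\rss(\Li(q))$ is a non-empty open subset of it: it contains $C \cap X(q)^\rss(\Li(q)) = C^\rss(\Li(q)_{|C})$ by \eqref{eq:1}, which is non-empty since its quotient $\PP^q$ is. Hence $X(q)^\rss(\Li(q)) \quot G(q)$ is an irreducible variety of dimension at most $q$ that contains the closed subvariety $\im \pi$ of dimension $q$; the two must coincide, yielding the surjectivity of $\pi$. The main obstacle in this argument is securing the upper bound $\modxq \leq q$, for which Proposition~\ref{prop:modality}(2) (and the key vanishing $\tgen \leq \sgen - 1$ behind it) is essential; once this bound is in hand, everything else follows formally from finiteness on the image and from the irreducibility of $X(q)$.
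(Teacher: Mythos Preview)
Your proof is correct and follows essentially the same route as the paper's: both compute $\dim\bigl(C^\rss(\Li(q)_{|C})\quot N_{G(q)}(C)\bigr)=q$ via \eqref{equa:CmodN}, bound $\dim\bigl(X(q)^\rss(\Li(q))\quot G(q)\bigr)\leq q$ via Propositions~\ref{prop:modality} and~\ref{prop:modmax}, and conclude surjectivity from finiteness of the map on its image (Lemma~\ref{lem:pifinite}) together with properness. Your version is in fact a bit more careful than the paper's on two points: you make explicit that the map $\pi$ of the lemma is the map $p$ of \eqref{def-pi} factored through the further $\fS_q$-quotient (so that Lemma~\ref{lem:pifinite} still applies), and you spell out the irreducibility and non-emptiness of the target needed to pass from ``image has full dimension'' to ``surjective''.
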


\begin{proof}
The morphism $\pi$ is well defined by \eqref{eq:1}.
By properness, it is sufficient to prove that it is dominant. 
First observe that $\iota_q^*(\Li(q)_{|C})=\Li^{\boxtimes q}$ on $X^q$.
On the one hand, by \eqref{equa:CmodN},
$\dim C^\rss(\Li(q))\quot N_{G(q)}(C)=q$.
On the other hand, Proposition~\ref{prop:modality} and the assumption $\modx=1$
imply that $\modxq\leq q$.
Then, Proposition~\ref{prop:modmax} implies
$\dim X(q)^\rss(\Li(q))\quot G(q)\leq q$.  

Now, Lemma~\ref{lem:pifinite} implies that $\pi$ is surjective being
proper and finite.
\end{proof}

\begin{lemma}
\label{lem:res-injective}
For any $p\geq 0$, the restriction map
$$
\Ho(X(q),\Li(q)^{\otimes p})^{G(q)}\longto
\Ho(C,\Li(q)^{\otimes p})^{N_{G(q)}(C)}
$$
is injective.
For $p=1$, it is an isomorphism.
\end{lemma}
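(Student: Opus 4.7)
The plan is to treat injectivity and the $p=1$ isomorphism by separate arguments.

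For injectivity, I would use Geometric Invariant Theory. Since $\Li(q)$ is ample on $X(q)$, the quotient map $\pi_q : X(q)^{\rm ss}(\Li(q)) \to Y := X(q)^{\rm ss}(\Li(q)) \quot G(q)$ is a \emph{good} quotient: it is surjective, and it induces a bijection between $G(q)$-stable closed subsets of $X(q)^{\rm ss}$ and closed subsets of $Y$. Let $\sigma \in \Ho(X(q),\Li(q)^{\otimes p})^{G(q)}$ lie in the kernel of the restriction. Its vanishing locus $Z \subset X(q)$ is closed and $G(q)$-stable and contains $C \cap X(q)^{\rm ss}$. Thus $\pi_q(Z \cap X(q)^{\rm ss})$ is a closed subset of $Y$ containing $\pi_q(C \cap X(q)^{\rm ss})$, and the latter equals all of $Y$ by Lemma~\ref{lem:pisurj}. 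By the bijection above, $Z \cap X(q)^{\rm ss} = X(q)^{\rm ss}$, i.e.\ $\sigma$ vanishes on the dense open subset $X(q)^{\rm ss}$ of the irreducible variety $X(q)$, and hence $\sigma = 0$.

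For the $p = 1$ case, the plan is to combine the just-proved injectivity with a dimension count. On one hand, Theorem~\ref{th:Iken} of Ikenmeyer and Sherman, together with the interpretation \eqref{equa:cxq}, yields
$$
\dim \Ho(X(q), \Li(q))^{G(q)} = c_{\lambda(1,q),\mu(1,q),\nu(1,q)} = q+1.
$$
On the other hand, using $C \simeq X^q$, the identification $N_{G(q)}(C) = G^q \ltimes \fS_q$ of Proposition~\ref{pro:N}, and the relation $\iota_q^*(\Li(q)_{|C}) = \Li^{\boxtimes q}$, the computation performed in Step~2 of Section~\ref{sec:mod1} (specialized to $p = 1$) gives
$$
\Ho(C, \Li(q)_{|C})^{N_{G(q)}(C)} \simeq \Ho(\PP^q, \cO_{\PP^q}(1)),
$$
also of dimension $q + 1$. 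An injective linear map between two vector spaces of equal finite dimension is an isomorphism.

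The geometric content is entirely packaged inside Lemma~\ref{lem:pisurj}, which says that the $\tau$-fixed component $C$ already hits every point of the GIT-quotient of $X(q)$. Once that lemma is in hand, injectivity reduces to a formal manipulation with good quotients, and surjectivity at $p = 1$ is automatic from the matching dimensions supplied by Theorem~\ref{th:Iken} and Step~2. The main obstacle is therefore not in the proof of this lemma \emph{per se} but rather in recognizing that it is the technical bridge which will allow the full $(p,q)$-statement to be bootstrapped from the $p = 1$ case, using the multiplicative structure on $\bigoplus_p \Ho(C,\Li(q)^{\otimes p})^{N_{G(q)}(C)}$ that Step~2 exhibits.
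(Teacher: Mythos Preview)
Your handling of the $p=1$ case matches the paper exactly: injectivity plus the dimension count coming from Theorem~\ref{th:Iken} and the computation of Step~2.

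For injectivity, your overall strategy is sound but one step is misjustified. The assertion that a good quotient ``induces a bijection between $G(q)$-stable closed subsets of $X(q)^{\rm ss}$ and closed subsets of $Y$'' is false in general: a single fiber of $\pi_q$ can contain several orbits, and the closure of any one of them is a $G(q)$-stable closed subset mapping to the same point of $Y$, so the putative bijection is not injective. From $\pi_q(Z\cap X(q)^{\rm ss})=Y$ alone you cannot conclude $Z\cap X(q)^{\rm ss}=X(q)^{\rm ss}$. What you actually need, and what is true, is that the zero locus of a $G(q)$-invariant section of $\Li(q)^{\otimes p}$ is \emph{saturated}, i.e.\ a union of fibers of $\pi_q$. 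This holds because, by the very construction of the quotient as $\Proj$ of the ring of invariants, the non-vanishing locus $\{\sigma\neq 0\}$ equals $\pi_q^{-1}(D_+(\sigma))$; hence its complement in $X(q)^{\rm ss}$ is saturated as well. Once $Z\cap X(q)^{\rm ss}$ is known to be saturated and (via Lemma~\ref{lem:pisurj}) to meet every fiber, it must equal $X(q)^{\rm ss}$, and you are done.

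With this correction your route is shorter than the paper's. The paper does not invoke saturation; instead it argues fiber by fiber: for each $\xi$ in the quotient it picks a point $x_0$ in the closed orbit of $\pi_q^{-1}(\xi)$, uses reductivity of the stabilizer $G(q)_{x_0}$ and affineness of the fiber together with a descent lemma (\cite{Br:PicVarSphe} or \cite{bass}) to show that some power $\Li(q)^{\otimes k}$ restricts trivially to $\pi_q^{-1}(\xi)$, so that $\sigma^{\otimes k}$ becomes a constant function there, and then concludes from $C\cap\pi_q^{-1}(\xi)\neq\emptyset$ that this constant is zero. Your argument bypasses the line-bundle triviality discussion entirely by exploiting the $\Proj$ description of the quotient.
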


\begin{proof}
The last assertion follows from the first by the equality of the dimensions which follows
from \eqref{equa:cxq} and Theorem \ref{th:Iken}.

Let $\sigma \in \Ho(X(q),\Li(q)^{\otimes p})^{G(q)}$ be such that its restriction to $C$ is zero.
Let $x\in X(q)$: we show that $\sigma(x)=0$.
If $x$ is unstable, then by definition this means that any invariant
section vanishes at $x$.
Assume that $x$ is semistable, and set $\xi=\pi_q(x)$.

Pick $x_0$ in the closed $G(q)$-orbit in
$\overline{G(q) \cdot x}\cap  X^{\rm ss}(G(q),\Li(q))$.
By semi-stability, there exists a positive integer $k$ such that  the
stabilizer $G(q)_{x_0}$ acts trivially on $\Li^{\otimes k}_{x_0}$. It follows that the character of
$G(q)_{x_0}$ which defines the $G(q)$-linearized line bundle $\Li^{\otimes k}_{|G(q) \cdot x_0}$ is the trivial
character, and $\Li^{\otimes k}_{|G(q) \cdot x_0}$ is the trivial
$G(q)$-linearized line bundle on $G(q) \cdot x_0$.

On the other hand, by \cite[Theorem 1.10]{GIT}, the fiber $\pi_q\inv(\xi)$ is affine, and by \cite[Theorem 1]{bialynicki},
the stabilizer $G(q)_{x_0}$ is reductive.
We can therefore apply \cite[Lemma 2.1]{Br:PicVarSphe} (note that the normality assumption is not used in the 
proof of this Lemma) or \cite[Corollary 6.4]{bass}, and conclude that the
restriction of $\Li^{\otimes k}$ to $\pi_q\inv(\xi)$ is trivial.

Hence $\sigma^{\otimes k}$ can be viewed as a regular constant function on
$\pi_q\inv(\xi)$.
But Lemma~\ref{lem:pisurj} implies that $C$ intersects
$\pi_q\inv(\xi)$. Hence $\sigma^{\otimes k}$ vanishes on $\pi_q\inv(\xi)$. Finally
$\sigma^{\otimes k}$ and $\sigma$ vanish identicaly on
$\pi_q\inv(\xi)$. In particular $\sigma(x)=0$.                                                                                                                                                                  
\end{proof}

\step{Conclusion.}

Consider the following commutative diagram

\begin{equation}
\label{comm-diagram}
\begin{tikzcd}
S^p\Ho(X(q),\Li(q))^{G(q)}\ar[r,"\simeq"] \ar[d,"product" left] &
S^p\Ho(C,\Li(q))^{N_{G(q)}(C)}\ar[d,"\simeq"] \\
\Ho(X(q),\Li(q)^{\otimes p})^{G(q)} \arrow[hookrightarrow]{r} &
\Ho(C,\Li(q)^{\otimes p})^{N_{G(q)}(C)}
\end{tikzcd}
\end{equation}
The top horizontal map is an isomorphism and the bottom one is injective by Lemma \ref{lem:res-injective}.
The right vertical map is an isomorphism by \eqref{equa:sectionsC}.
It follows that the product map is an isomorphism. 

\begin{coro}
\label{coro:Pq}
 The GIT-quotient $X(q)^{\rm{ss}}(\Li(q)) \quot G(q)$ is isomorphic to $\PP^q$.
\end{coro}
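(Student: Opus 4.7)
The plan is to read off this corollary directly from the commutative diagram \eqref{comm-diagram} that has just been established. Since the top horizontal map is an isomorphism, the right vertical map is an isomorphism (by \eqref{equa:sectionsC}), and the bottom horizontal map is injective (by Lemma~\ref{lem:res-injective}), the left vertical product map is forced to be an isomorphism as well. This means that for every $p\geq 0$, the multiplication map
$$
S^p\Ho(X(q),\Li(q))^{G(q)} \longto \Ho(X(q),\Li(q)^{\otimes p})^{G(q)}
$$
is an isomorphism, so the graded algebra
$$
R:=\bigoplus_{p\geq 0} \Ho(X(q),\Li(q)^{\otimes p})^{G(q)}
$$
is the symmetric algebra on its degree-one part $\Ho(X(q),\Li(q))^{G(q)}$.

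Next I would identify the dimension of this degree-one piece. By Lemma~\ref{lem:res-injective} applied with $p=1$, restriction to $C$ yields an isomorphism
$$
\Ho(X(q),\Li(q))^{G(q)} \simeq \Ho(C,\Li(q))^{N_{G(q)}(C)},
$$
and the chain of isomorphisms in \eqref{equa:sectionsC} together with Lemma~\ref{lem:algC} identifies this with $\Ho(\PP^q,\cO_{\PP^q}(1))$, a vector space of dimension $q+1$.

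Combining the two steps, $R$ is a polynomial algebra generated in degree $1$ by a vector space of dimension $q+1$, i.e.\ $R\simeq\bigoplus_{p\geq 0}\Ho(\PP^q,\cO_{\PP^q}(p))$ as graded algebras. Passing to $\Proj$ and using \eqref{equa:git-quotient} then gives
$$
X(q)^{\rm ss}(\Li(q))\quot G(q) \,=\, \Proj(R) \,\simeq\, \PP^q,
$$
with $\cO_{\PP^q}(1)$ corresponding to (a suitable root of) $\Li(q)$. There is no real obstacle here: all the work has been done in Steps 1--4 of Section~\ref{sec:mod1}, and the corollary is simply the geometric repackaging of the algebraic statement that the product map in the diagram \eqref{comm-diagram} is an isomorphism.
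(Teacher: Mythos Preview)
Your proof is correct and follows essentially the same route as the paper: you use the commutative diagram~\eqref{comm-diagram} to deduce that the product map is an isomorphism, conclude that the graded invariant algebra is the symmetric algebra on its $(q+1)$-dimensional degree-one piece, and take $\Proj$. The paper's proof is the same argument stated more tersely, with the fact that the product map is an isomorphism already recorded just before the corollary and the dimension $q+1$ asserted without spelling out the chain of isomorphisms you give.
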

\begin{proof}
By definition, this quotient is $\Proj$ of the algebra
$\bigoplus_p \Ho(X(q),\Li(q)^{\otimes p})^{G(q)}$. Since the product map in \eqref{comm-diagram} is an isomorphism,
this algebra is the symmetric algebra on $\Ho(X(q),\Li(q))^{G(q)}$, which is a vector space of dimension $q+1$.
\end{proof}

\subsection{Reduction to the case of expected quotient dimension $1$}

Observe that $\Li$ is ample and has $G$-invariant sections, so it belongs to $\acgx$.
We proceed by induction on $n$, considering two
cases:
\medskip

\noindent \underline{Case 1:} $\Li$ belongs to the interior of
$\acgx$. 

Then, by Proposition
\ref{prop:modmax}, the dimension of $X^\rss(\Li) \quot G$ is equal to $\modx$.
By Theorem~\ref{th:Iken}, we have $\dim \Ho(X,\Li^{\otimes p})^G = p+1$.
By definition of $X^\rss(\Li) \quot G$, see \eqref{equa:git-quotient}, the dimension of $X^\rss(\Li) \quot G$
is the degree of this polynomial, namely $1$.

We deduce that $\modx=1$ and we are done by Section\ref{sec:mod1}.

\medskip
\noindent \underline{Case 2:} $\Li$ is in the boundary of $\acgx$.

By Proposition~\ref{prop:ample-cone} and the ampleness of $\Li$, there
exist an integer $r$ and
$I,J,K \subset \{1,\ldots,n\}$
of cardinality $r$ such that
\begin{equation}
 \label{equa:face}
 \sigma_I \cup \sigma_J \cup \sigma_K = [pt] \mbox{ and }
 \frac 1r (|\lambda_I| + |\mu_J| + |\nu_K|) = \frac 1n (|\lambda| + |\mu| + |\nu|).
\end{equation}

Then, since the product $\sigma_I \cup \sigma_J \cup \sigma_K$ is equal to the class of the point,
by multiplicativity of Littlewood-Richardson coefficients \cite{dw,ressayre-birational},
we have $2=\Clmn=c_{\lambda_I,\mu_J,\nu_K} \cdot c_{\lambda_\oI,\mu_\oJ,\nu_\oK}$,
where $\oI=\{1,\ldots,n\} \setminus I$ (and similarly for $\oJ$ and $\oK$). We may thus assume the equalities
$c_{\lambda_I,\mu_J,\nu_K}=2$ and $c_{\lambda_\oI,\mu_\oJ,\nu_\oK}=1$. By induction, we deduce
$c_{\lambda_I(p,q),\mu_J(p,q),\nu_K(p,q)}=\binom{p+q}{p}$. By Fulton's
conjecture as stated in
Theorem \ref{th:Fulton}, we have
$c_{\lambda_\oI(p,q),\mu_\oJ(p,q),\nu_\oK(p,q)}=1$. Thus, the proof in this case will be finished if we can prove
that
\begin{equation}
\label{equa:c-product}
 c_{\lambda(p,q),\mu(p,q),\nu(p,q)} = c_{\lambda_I(p,q),\mu_J(p,q),\nu_K(p,q)}
 \cdot c_{\lambda_\oI(p,q),\mu_\oJ(p,q),\nu_\oK(p,q)}.
\end{equation}

\medskip

Relation~\eqref{equa:c-product} is proved using multiplicativity again.
First, observe that $\lambda_I(p,q)$ is equal to the partition $\lambda(p,q)_{I_q}$, where
\begin{equation}
\label{equa:Iq}
I_q=\{(i_1-1)q+1,\ldots,i_1q,(i_2-1)q+1,\ldots,i_2q,\ldots,(i_r-1)q+1,\ldots,i_rq\}
\end{equation}
if $I=\{i_1,\ldots,i_r\}$. Note that Schubert classes in $\G(r,n)$ are parametrized by subsets $I$ of $\{1,\ldots,n\}$
as we did in Section \ref{sec:horn}, and also by partitions whose Young diagram is included in a $r \times (n-r)$
rectangle. The correspondance maps a subset $I=\{i_1 < i_2 < \ldots < i_r \}$ to the partition
$(i_r-r,\ldots,i_2-2,i_1-1)$. Therefore, the partition corresponding to
$I_q$ is $q(i_r-r),\ldots,q(i_r-r),\ldots,q(i_1-1),\ldots,q(i_1-1)$ (with each part being repeated $q$ times).

If $\alpha$ denotes the partition corresponding to the subset $I$, then the partition corresponding to the subset
$I_q$ is $\alpha(q,q)$. Thus, by Theorem \ref{th:Fulton} again, the equality
$$
\sigma_I \cup \sigma_J \cup \sigma_K = [pt] \in H^*(\G(r,n),\Z)
$$
implies the equality
$$
\sigma_{I_q} \cup \sigma_{J_q} \cup \sigma_{K_q} = [pt] \in H^*(\G(qr,qn),\Z).
$$
By multiplicativity of Littlewood-Richardson coefficients, \eqref{equa:c-product} holds.

\section{About the case $c_{\lambda,\mu,\nu}>2$}

A key point in our proof is Lemma~\ref{lem:res-injective} showing
that, under the assumption $c_{\lambda,\mu,\nu}=2$, the restriction
map
$$
\rho_C\,:\,\Ho(X(q),\Li(q))^{G(q)}\longto
\Ho(C,\Li(q))^{N_{G(q)}(C)}=S^q \Ho(X,\Li)^{G}
$$
is injective. 
The following example shows that $\rho_C$ is not always injective.

\begin{exple}
   This example is mainly due to P.~Belkale \cite[Example
   3.7]{Belk:explec6}. For $G=\GL_8(\CC)$, consider
   $\lambda=\mu=(3,3,2,2,1,1)$ and $\nu=(4,4,4,3,3,2,2,2)$. We have
   $c_{\lambda,\mu}^\nu=6$.
Consider the Littlewood-Richardson polynomial $P_{\lambda,\mu}^\nu$
(see \cite{DW:LRpol}) such that for any $q\in\ZZ_{\geq 0}$,
$c_{q\lambda,\,q\mu}^{q\nu}=P(q)$.
This Littlewood-Richardson coefficient is obtained as the dimension of
a space of $G$-invariant sections on
$X=\Fl(2,4,6;\CC^8)^2\times\Fl(3,5;\CC^8)$.
It is easy to check that there exists $x$ in $X$ whose isotropy
group consists in the homotheties. Then $\modx=6$.
As a consequence the degree of  $P_ {\lambda,\mu}^\nu$ is
at most 6.
Using Buch's calculator \cite{Buch:calc}, one obtains that $P(0)=1$,
$P(1)=6$,
$P(2)=22$, $P(3)=63$, $P(4)=154$, $P(5)=336$ and $P(6)=672$.
Using Lagrange interpolation, one gets
$$
P_ {\lambda,\mu}^\nu(q)=\frac 1 {720}q^6+\frac 1 {48}q^5+\frac {23}
{144}q^4+\frac {35}{48}q^3+\frac {331} {180}q^2+\frac 9 {4}q+1,
$$
which indeed has degree $6$. In particular,  the map $\rho_C$  is not 
injective for $q$ big enough, since $\dim(S^q
\Ho(X,\Li)^{G})=\dim(S^q\CC^6)=\binom{q+5}{5}$ is a polynomial
function in $q$ of degree $5$.

Note that similarly, one gets
$$
P_ {\lambda',\mu'}^{\nu'}(k)=\frac {5} {2}(k^2+k)+1.
$$
\end{exple}


\bibliographystyle{amsalpha}
\bibliography{bidilatations}

\providecommand{\bysame}{\leavevmode\hbox to3em{\hrulefill}\thinspace}
\providecommand{\MR}{\relax\ifhmode\unskip\space\fi MR }
\providecommand{\MRhref}[2]{%
  \href{http://www.ams.org/mathscinet-getitem?mr=#1}{#2}
}
\providecommand{\href}[2]{#2}
\begin{thebibliography}{{Res}11a}

\bibitem[BB63]{bialynicki}
A.~Bialynicki-Birula, \emph{On homogeneous affine spaces of linear algebraic
  groups}, Am. J. Math. \textbf{85} (1963), 577--582 (English).

\bibitem[Bel01]{belkale:horn}
Prakash Belkale, \emph{Local systems on {$\Bbb P^1-S$} for {$S$} a finite set},
  Compositio Math. \textbf{129} (2001), no.~1, 67--86. \MR{1856023}

\bibitem[Bel03]{Belk:explec6}
Prakash Belkale, \emph{Irredundance in eigenvalue problems}, 2003,
  arXiv/math/0308026.

\bibitem[{Bel}07]{belkale:fulton}
Prakash {Belkale}, \emph{{Geometric proof of a conjecture of Fulton}}, {Adv.
  Math.} \textbf{216} (2007), no.~1, 346--357 (English).

\bibitem[BH85]{bass}
H.~Bass and W.~Haboush, \emph{Linearizing certain reductive group actions},
  Trans. Am. Math. Soc. \textbf{292} (1985), 463--482 (English).

\bibitem[Bou02]{bourb}
Nicolas Bourbaki, \emph{Lie groups and {L}ie algebras. {C}hapters 4--6},
  Elements of Mathematics (Berlin), Springer-Verlag, Berlin, 2002, Translated
  from the 1968 French original by Andrew Pressley.

\bibitem[Bri89]{Br:PicVarSphe}
M.~Brion, \emph{Groupe de {P}icard et nombres caract\'eristiques des
  vari\'et\'es sph\'eriques}, Duke Math. J. \textbf{58} (1989), no.~2,
  397--424.

\bibitem[Buc]{Buch:calc}
Anders Buch, \emph{Quantum calculator --- a software maple package},
  {A}vailable at {\tt www.math.rutgers.edu/~asbuch/qcalc}.

\bibitem[DH98]{DH}
Igor~V. Dolgachev and Yi~Hu, \emph{Variation of geometric invariant theory
  quotients}, Inst. Hautes \'Etudes Sci. Publ. Math. \textbf{87} (1998), 5--56,
  With an appendix by Nicolas Ressayre.

\bibitem[DN89]{drezet}
J.-M. Drezet and M.~S. Narasimhan, \emph{Groupe de {Picard} des
  vari{\'e}t{\'e}s de modules de fibr{\'e}s semi-stable sur les courbes
  alg{\'e}briques. ({Picard} groups of moduli varieties of semi- stable bundles
  on algebraic curves)}, Invent. Math. \textbf{97} (1989), no.~1, 53--94
  (French).

\bibitem[DW02]{DW:LRpol}
Harm Derksen and Jerzy Weyman, \emph{On the {L}ittlewood-{R}ichardson
  polynomials}, J. Algebra \textbf{255} (2002), no.~2, 247--257.

\bibitem[DW11]{dw}
\bysame, \emph{The combinatorics of quiver representations}, Ann. Inst. Fourier
  (Grenoble) \textbf{61} (2011), no.~3, 1061--1131.

\bibitem[{Ike}16]{ikenmeyer}
Christian {Ikenmeyer}, \emph{{Small Littlewood-Richardson coefficients}}, {J.
  Algebr. Comb.} \textbf{44} (2016), no.~1, 1--29 (English).

\bibitem[Kly98]{klyachko}
Alexander~A. Klyachko, \emph{Stable bundles, representation theory and
  {H}ermitian operators}, Selecta Math. (N.S.) \textbf{4} (1998), no.~3,
  419--445. \MR{1654578}

\bibitem[KTW04]{KTW}
Allen {Knutson}, Terence {Tao}, and Christopher {Woodward}, \emph{{The
  honeycomb model of \(\text{GL}_n({\mathbb C})\) tensor products. II: Puzzles
  determine facets of the Little\-wood-Richardson cone}}, {J. Am. Math. Soc.}
  \textbf{17} (2004), no.~1, 19--48 (English).

\bibitem[{Lun}75]{luna:invariants}
D.~{Luna}, \emph{{Adherences d'orbite et invariants}}, {Invent. Math.}
  \textbf{29} (1975), 231--238 (French).

\bibitem[MFK94]{GIT}
D.~{Mumford}, J.~{Fogarty}, and F.~{Kirwan}, \emph{{Geometric invariant
  theory.}}, vol.~34, Berlin: Springer-Verlag, 1994 (English).

\bibitem[Res10]{GITEigen}
Nicolas Ressayre, \emph{Geometric invariant theory and generalized eigenvalue
  problem}, Invent. Math. \textbf{180} (2010), 389--441.

\bibitem[{Res}11a]{ressayre:fulton}
Nicolas {Ressayre}, \emph{{A short geometric proof of a conjecture of Fulton}},
  {Enseign. Math. (2)} \textbf{57} (2011), no.~1-2, 103--115 (English).

\bibitem[Res11b]{ressayre-birational}
Nicolas Ressayre, \emph{Reductions for branching coefficients}, J. of Lie
  Theory (to appear) (2011), arXiv:1102.0196.

\bibitem[Res12]{algo}
\bysame, \emph{A cohomology-free description of eigencones in types {A}, {B},
  and {C}}, Int. Math. Res. Not. IMRN (2012), no.~21, 4966--5005.

\bibitem[She15]{sherman2015geometric}
Cass Sherman, \emph{Geometric proof of a conjecture of {K}ing, {T}ollu, and
  {T}oumazet}, 2015.

\bibitem[{She}17]{sherman}
Cass {Sherman}, \emph{{Quiver generalization of a conjecture of {K}ing,
  {T}ollu, and {T}oumazet}}, {J. Algebra} \textbf{480} (2017), 487--504
  (English).

\bibitem[{Tel}00]{teleman}
Constantin {Teleman}, \emph{{The quantization conjecture revisited}}, {Ann.
  Math. (2)} \textbf{152} (2000), no.~1, 1--43 (English).

\end{thebibliography}

\end{document}